\theoremstyle{plain}
\newtheorem{thm}{Theorem}
\newtheorem{lem}[thm]{Lemma}
\newtheorem{prop}[thm]{Proposition}
\theoremstyle{definition}
\newtheorem*{defn}{Definition}
\newtheorem*{rem}{Remark}
\newcommand{\RR}{\mathbb R}
\newcommand{\diag}{\operatorname{diag}}
\newcommand{\pa}{\operatorname{pa}}
\newcommand{\dsp}{\displaystyle}
\title{Parameter identifiability of discrete Bayesian networks with hidden variables}
\author{
{Elizabeth S.~Allman}\\
Department of Mathematics and Statistics \\
University of Alaska Fairbanks\\
\and
{John A.~Rhodes}\\
Department of Mathematics and Statistics \\
University of Alaska Fairbanks\\
\and
{Elena Stanghellini}\\
Dipartimento di Economia Finanza e Statistica\\
Universit\`a di Perugia\\
\and
{Marco Valtorta}\\
Deptartment of Computer Science and Engineering \\
University of South Carolina\\
}
\date{June 2, 2014}
\newcommand{\Addresses}{
\bigskip
\footnotesize

E.S.~Allman,\textsc{
Department of Mathematics and Statistics,
University of Alaska Fairbanks, Fairbanks, AK 99775, USA}, \texttt{esallman@alaska.edu}

\medskip 

J.A.~Rhodes,\textsc{
Department of Mathematics and Statistics,
University of Alaska Fairbanks, Fairbanks, AK 99775, USA}, \texttt{j.rhodes@alaska.edu}

\medskip

E.~Stanghellini,\textsc{
Dipartimento di Economia Finanza e Statistica,
Universit\`a di Perugia, 06100 Perugia, Italy}, \texttt{elena.stanghellini@stat.unipg.it}

\medskip

M.~Valtorta,\textsc{
Department of Computer Science and Engineering,
University of South Carolina, Columbia, SC 29208, USA}, \texttt{MGV@cse.sc.edu}

}
\begin{document}

\maketitle

\begin{abstract}
Identifiability of parameters is an essential property for a statistical model to be useful in most settings. However,
establishing parameter identifiability for Bayesian networks with hidden variables remains challenging.  In the context of finite state spaces, we give algebraic
arguments establishing identifiability of some special models on small DAGs. We also establish that, for fixed state spaces, generic identifiability of parameters depends only on the Markov equivalence class of the DAG.
To illustrate the use of these results, we investigate identifiability for all binary Bayesian networks with up to five variables, one of which is hidden and parental to all observable ones.
Surprisingly, some of these models have parameterizations that are generically 4-to-one, and not 2-to-one as label swapping of the hidden states would suggest. This leads to interesting difficulties in interpreting causal effects.
\end{abstract}

\section{Introduction}

A Directed Acyclic Graph (DAG) can represent the factorization of a joint distribution of a set of random variables. To be more precise, a Bayesian network is a pair (G,P), where G is a DAG and P is a joint probability distribution of variables in one-to-one correspondence with the nodes of G, with the property that each variable is conditionally independent of its non-descendants given its parents.  It follows from this definition that the joint probability P factors according to G, as the product of the conditional probabilities of each node given its parents. Thus a discrete Bayesian network is fully specified by a DAG and a set of conditional probability tables, one for each node given its parents \citep{Neapolitan90, Neapolitan04}. 

A causal Bayesian network is a Bayesian network enhanced with a causal interpretation. Work initiated by \citet{Pearl1995,Pearl2nd} investigated the
identification of causal effects in causal Bayesian networks when 
some variables are assumed
observable and others are hidden. In a non-parametric setting,
with no assumptions about the state space of variables, 
there is a
complete algorithm for determining which causal effects between
variables are identifiable \citep{Huang2006, Shpitser2008, TianP02a,
Pearl2012}.

\begin{figure}[h]\label {fg:Kuroki}
\begin{center}
\includegraphics[height=.07\textheight]{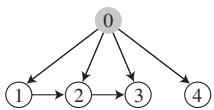}
\end{center}
\caption{The DAG of a Bayesian network studied by \citet{Kuroki2014}, denoted 4-2b in the Appendix. }
\end{figure}

As powerful as this theory is, however, it does not address
identifiability when assumptions are made on the nature of
the variables. Indeed, by specializing to finite state
spaces, causal effects that were non-identifiable according to the
theory above may become identifiable. 
One particular example, with DAG shown in Figure \ref{fg:Kuroki},
has been studied by \citet{Kuroki2014}.
If the state space of hidden variable 0 is finite, and observable variables 1 and 4 have state spaces of larger sizes,
then the causal effect of variable 2 on variable 3 can be determined, for generic parameter choices. 

\medskip

In this paper we study in detail identification properties of certain small Bayesian networks, as a first step toward developing a systematic understanding of identification in the presence of finite hidden variables. While this includes an analysis of the model with the DAG above, our motivation is different from that of \citet{Kuroki2014}, and results were obtained independently.
We make a thorough study of networks with up to five binary variables, one of which is unobservable and parental to all observable
ones, as shown in Table \ref{tb:results} of the Appendix. This leads us to develop some basic tools and arguments that can be applied more generally to questions of parameter identifiability.
Then, for each such binary model, we determine a value $k\in \mathbb N\cup\{\infty\}$ such that the marginalization from the full joint distribution to that over the observable variables is generically $k$-to-one. 
Although we restrict this exhaustive study to binary models for simplicity, straightforward modifications to our arguments would extend them to larger state spaces.
A typical requirement for such an extended identifiability result is that the state spaces of observable variables be sufficient large, relative to that of the hidden variable, as in the result of  \citet{Kuroki2014} described above. (In particular, that result restricted to finite state spaces follows easily from our framework, and can be obtained for continuous state spaces of observable variables using arguments of \cite{AMR09}.)

\medskip

We use the term ``DAG model"  for the collection of all Bayesian networks with the same DAG and specification of state spaces for the variables. With the conditional probability tables of nodes given their parents forming the parameters of the model,
we thus allow these tables to range over all valid tables of a fixed size to give the parameter space of such a model.

In dealing with discrete unobserved variables, one well-understood identifiability issue is sometimes called \emph{label swapping}. If the latent variable has $r$ states, there are $r!$ parameter choices, obtained by permuting the state labels of the latent variable, that generate the same observable distribution. Thus the parametrization map is generically at least $r!$-to-one. For models with a single binary latent variable, it is thus commonly expected that parameterizations are either infinite-to-one due to a parameter space of too high a dimension, or 2-to-one due to label swapping. Our work, however, finds surprisingly simple examples such that the mapping is 4-to-one, so that more subtle non-identifiability issues arise. The implications of this for determining causal effects are also explored. 

Our analysis arises from an algebraic viewpoint of the identifiability problem.
With finite state spaces the parameterization maps for DAG models with hidden variables are polynomial. Given a distribution arising from
the model, the parameters are identifiable precisely when a certain system of multivariate
polynomial equations has exactly one solution  (up to label-swapping of states for hidden variables).
Though in principle computational algebra software can be used to
investigate parameter identifiability, the necessary calculations are usually intractable for even moderate size DAGs and/or state spaces.
In addition, one runs into issues of complex versus real roots, and the difficulty of
determining when real roots lie within stochastic bounds. While our arguments
are fundamentally algebraic, they do not depend on any machine
computations.

If a single polynomial $p(x)$ in one variable is given, of degree $n$, then it is well
known that the map from $\mathbb C$ to $\mathbb C$ that it defines will be
generically $n$-to-one. Indeed the equation $p(x)=a$ will be of degree $n$
for each choice of $a$, and generically will have $n$ distinct roots. This fact
generalizes to polynomial maps from $\mathbb C^n$ to $\mathbb C^m$;
there always exists a $k\in \mathbb N\cup\{\infty\}$ such that the map is generically $k$-to-one.

However if $p(x)$ has real coefficients, and is instead viewed as a map from (a subset of)
$\mathbb R$ to $\mathbb R$, it may not have a generic $k$-to-one behavior. For instance,
from a typical graph of a cubic one sees there can be a sets of positive
measure on which it is 3-to-one, and others on which it is one-to-one, as well as an exceptional
set of measure zero on which the cubic is 2-to-one.
While this exceptional set arises since a
polynomial may have repeated roots, the lack of a generic $k$-to-one behavior is due to
passing from considering a complex domain for the function, to a real one.

The fact that the polynomial parameterizations for the models investigated here have a
generic $k$-to-one behavior on their parameter space thus depends on the particular form of the parameterizations.
For those binary models in Table \ref{tb:results}, we prove this essentially one model at a time, while obtaining the value
for $k$. In the case of finite $k$, our arguments actually go further and characterize the $k$
elements of $\phi^{-1}(\phi(\theta))$ in terms of a generic $\theta$. Of course
when $k=2$ this is nothing more than label swapping, but for the cases of $k=4$
more is required. Precise statements appear in later sections. In some cases,
we also give descriptions of an exceptional subset of $\Theta$
where the generic behavior may not hold.
In all cases, the reader can deduce such a set from our arguments.

\medskip

After setting terminology in Section \ref{sec:defs}, in Section \ref{sec:markovequiv} we establish that, when all variables have fixed finite state spaces, Markov equivalent DAGs specify parameter equivalent models. More specifically, there is a invertible rational map between generic parameters on the DAGs which lead to the same distributions. Thus in answering generic identifiability questions one need only consider Markov equivalence classes of DAGs.
In Section \ref{sec:special} we revisit the fundamental result due to \citet{Kruskal77}, as developed in \citet{AMR09} for identifiability questions. We give an explicit identifiability procedure for the DAG it most directly applies to. We also use our proof  technique for this explicit Kruskal result to obtain an identification procedure for a different specific DAG. These two DAGs are basic cases whose known identifibility can be leveraged to study other models.

In Section \ref{sec:binary} these general theorems, combined with auxiliary arguments, are enough to determine generic identifiability of all the binary DAG models we catalog. Although we do not push these arguments toward exhaustive consideration of non-binary models here, in many cases it would be straightforward to do so. For instance if all variables associated to a DAG have the same size state space, little in our arguments needs to be modified. For models in which different variables have different size finite state spaces, one must be more careful, but many generalizations are fairly direct. Finally in Section \ref{sec:4to1} we investigate the implications of the generically 4-to-one parameterization uncovered for one of these models.

\medskip

We view the main contribution of this paper not as the determination of parameter
identifiability for the specific binary models we consider, but rather as the development of the techniques
by which we establish our results. We believe these examples will lead to a more general
understanding of identifiability for finite state DAG models.
Ultimately, one
would like fairly simple graphical rules to determine which
parameters are identifiable, and perhaps even to yield formulas for
them in terms of the joint distribution. While it is unclear to what
extent this is possible, even partial results covering only certain
classes of DAGs, or some state spaces, would be useful.

Ultimately, establishing similar results for more general graphical models, not specified by a DAG, would be desirable. Some work in this context already exists; see, for example, \citet{Stanghellini2013}. However, in both the DAG and more general setting, investigations are still at a rudimentary level.

\section{Discrete DAG models and parameter identifiability}\label{sec:defs}
The models we consider are specified in part by DAGs  $\mathcal G=(V,E)$ in which nodes $v\in V$
represent random variables $X_v$, and directed edges in $E$ imply
certain independence statements for the joint distribution of all
variables \citep{LauritzenBook}. A bipartition of $V=O\sqcup H$ is
given, in which variables associated to nodes in $O$ or $H$ are
observable or hidden, respectively. Finally, we fix finite state
spaces, of size $n_v$ for each variable $X_v$.

A DAG $\mathcal G$ entails a collection of conditional independence
statements on the variables associated to its nodes, via d-separation,
or an equivalent separation criterion in terms of the moral graph
on ancestral sets.
A joint distribution of variables satisfies these statements precisely when it has a factorization according
to $\mathcal G$ as  $$P=\prod_{v\in V} P(X_v|X_{\pa(v)}),$$ with
$\pa(v)$ denoting the set of parents of $v$ in $\mathcal G$. We refer to
the conditional probabilities $\theta=(P(X_v|X_{\pa(v)}))_{v\in V}$
as the \emph{parameters} of the DAG model, and denote the space of all
possible choices of parameters by $\Theta=\Theta_{\mathcal G,\{n_v\}}$. The
parameterization map for the joint distribution of all variables,
both observable and hidden, is denoted
$$\phi:\Theta\to\Delta^{(\prod_{v\in V} n_v)-1},$$ where $\Delta^{k}$ is the $k$-dimensional
probability simplex
of stochastic vectors in $\mathbb R^{k+1}$. Thus $\phi(\Theta)$ is
precisely the collection of all probability distributions satisfying
the conditional independence statements associated to $\mathcal G$ (and possibly additional ones).

Since the probability distribution for the model with hidden
variables is obtained from that of the fully observable model, its
parameterization map is
$$\phi^+=\sigma\circ\phi:\Theta\to\Delta^{(\prod_{v\in O} n_v)-1},$$
where $\sigma$ denotes the appropriate map marginalizing over hidden
variables. The set $\phi^+(\Theta)$ is thus the collection of all
observable distributions that arise from the hidden variable model. This
collection depends not only on the DAG and designated state spaces
of observable variables, but also on the state spaces of hidden
variables, even though the sizes of hidden state spaces are not
readily apparent from an observable joint distribution.

With all variables having finite state spaces, the parameter space
$\Theta$ can be identified with the closure of an open subset of
$[0,1]^L$, for some $L$. We refer to $L$ as the dimension of
the parameter space. The dimension of $\Theta$ is easily seen to be
\begin{equation}
\dim(\Theta)=\sum_{ v\in V}\left (( n_v-1)\prod_{w\in \pa(v)} n_w\right ).
\end{equation}
In the case of all binary variables, this simplifies to
\begin{equation}
\dim(\Theta)=\sum_{ v\in V}2^{|\pa(v)|}=\sum_{k=0}^\infty m_k2^k, \label{eq:dimtheta}
\end{equation}
where $m_k$ is the number of nodes in $\mathcal G$ with in-degree $k$.
\medskip

If a statement is said to hold for \emph{generic parameters}  or
\emph{generically} then we mean it holds for all parameters in a set
of the form $\Theta\smallsetminus E$, where the exceptional set $E$
is a proper algebraic subset of $\Theta$. (Recall an \emph{algebraic
subset}  is the zero set of a finite collection of multivariate polynomials.) As
proper algebraic subsets of $\RR^n$ are always of Lebesgue measure
zero, a statement that holds generically can fail only on a set of
measure zero.

As an example of this language,  for any DAG model with all variables
finite and observable, generic parameters lead to a distribution faithful
to the DAG, in the sense that those conditional independence statements
implied by d-separation rules will hold, and no others \citep{Meek95}.
Equivalently, a generic distribution from such a model is faithful to the DAG.

\medskip

There are several notions of identifiability of parameters of a
model; we refer the reader to \citet{AMR09}.  The strictest notion, that the parameterization map is one-to-one, is easily seen to hold when all DAG variables are observable with mild additional assumptions (\emph{e.g.}, positivity of all parameters).  
If a model has hidden variables, then this is too strict a notion of identifiability, as the well-known issue of label swapping arises: One can permute the names of the states of hidden
variables, making appropriate changes to associated parameters, without
changing the joint distribution of the
observable variables. 
For a model with one $r$-state  hidden
variable, label swapping implies that for any generic $\theta_1\in \Theta$ there are at least $r!-1$ other points $\theta_j\in \Theta$ with
$\phi^+(\theta_1)=\phi^+(\theta_j)$. But since these are
isolated parameter points that differ only by state labeling, this issue does not generally limit the usefulness of a model, provided that we remain aware of it when interpreting parameters.

The strongest useful notion of identifiability for models with
hidden variables is that for generic $\theta_1\in \Theta$, if
$\phi^+(\theta_1)=\phi(\theta_2)^+$, then $\theta_1$ and $\theta_2$
differ only up to label swapping for hidden variables. This notion
is our primary focus in this paper, which we refer to it as
\emph{generic identifiability up to label swapping}. In particular,
for models with a single binary hidden variable it is equivalent to
the parameterization map being generically 2-to-one.

\section{Markov equivalence and parameter identifiability}\label{sec:markovequiv}

Two DAGs on the same sets of observable and hidden nodes are said to
be \emph{Markov equivalent} if they entail the same conditional
independence statements
through d-separation. (Note this notion does not distinguish between
observable and hidden variables; all are treated as
observable.) Thus for fixed choices of state spaces of the variables,
two different
but Markov equivalent DAGs, $\mathcal G_1\cong \mathcal G_2$, have
different parameter spaces $\Theta_1, \Theta_2,$ and different
parameterization maps, yet $\phi_1(\Theta_1)=\phi_2(\Theta_2)$.

\medskip

For studying identifiability questions, it is helpful to first explore the relationship between parameterizations for Markov equivalent graphs.
A simple example, with no hidden variables, is instructive. Consider the DAGs on two observable nodes 
$$1\to 2, \ \ \ \ 1\leftarrow 2,$$  which are equivalent, since neither entails any independence statements. Now the particular probability distribution $P(X_1=i,X_2=j)=P_{ij}$ with 
$$P=\begin{pmatrix} 1/2&0\\1/2&0\end{pmatrix}$$ requires parameters on the first DAG to be $$P(X_1)=(1/2,1/2),\ P(X_2|X_1)=\begin{pmatrix}1&0\\1&0\end{pmatrix},$$ while parameters on the second DAG can be $$P(X_2)=(1,0),\ P(X_1|X_2)=\begin{pmatrix}1/2&1/2\\t&1-t\end{pmatrix}$$ for any $t\in[0,1]$. Thus this particular distribution has identifiable parameters for only one of these  DAGs. (Here and in the rest of the paper conditional probability tables specifying parameters have rows corresponding to states of conditioning, i.e. parent, variables.) 

Of course, this probability distribution was a special one, and is atypical for these models, which are easily seen to have  generically identifiable parameters (as do all  DAG models without hidden variables). Nonetheless, it illustrates the need for `generic' language and careful arguments for results such as the following.

\begin{thm}\label{thm:equivDAGs}
With all variables having fixed finite state spaces, consider two Markov equivalent
DAGs, $\mathcal G_1$ and $\mathcal G_2$, possibly with hidden nodes. If the
parameterization map $\phi_1^+$ is generically $k$-to-one for some
$k\in\mathbb N$, then $\phi_2^+$ is also generically $k$-to-one.

In particular if such a model has parameters that are generically identifiable up to
label swapping, so does every Markov equivalent model.
\end{thm}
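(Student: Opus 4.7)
The plan is to construct a rational, generically bijective correspondence $F:\Theta_1\to\Theta_2$ that intertwines the full-joint parameterizations. Since the marginalization $\sigma$ over hidden variables depends only on the common partition $V=O\sqcup H$, any such $F$ automatically satisfies $\phi_1^+=\phi_2^+\circ F$, and hence matches fibers of $\phi_1^+$ with fibers of $\phi_2^+$. Counting those fibers then gives the theorem, and specializing to $k=r!$ for a single $r$-state hidden variable yields the ``in particular'' clause (and more generally $k=\prod r_h!$ over hidden variables).

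First, I would recall that for any DAG $\mathcal G$ with every node treated as observable, the conditional probability identities $P(X_v\mid X_{\pa(v)})=P(X_v,X_{\pa(v)})/P(X_{\pa(v)})$ combined with the factorization exhibit an explicit rational inverse $\phi^{-1}$ to $\phi$, defined on the dense open subset of $\phi(\Theta)$ where the marginals $P(X_{\pa(v)})$ are nonzero. So $\phi_i$ is birationally injective for $i=1,2$. Second, Markov equivalence of $\mathcal G_1$ and $\mathcal G_2$ (treating \emph{all} variables as observable) means each image $\phi_i(\Theta_i)$ equals the set of distributions satisfying the common collection of d-separation CI statements, so $\phi_1(\Theta_1)=\phi_2(\Theta_2)$ generically. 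Composing, $F=\phi_2^{-1}\circ\phi_1$ is a rational map defined on a dense open $U_1\subseteq\Theta_1$, and $G=\phi_1^{-1}\circ\phi_2$ is rational on a dense open $U_2\subseteq\Theta_2$; on the dense open locus where the compositions make sense, $G\circ F=\mathrm{id}$, so $F$ is a birational isomorphism satisfying $\phi_2\circ F=\phi_1$ and thus $\phi_2^+\circ F=\phi_1^+$. For $\theta_1$ in the generic locus, $F$ sends $(\phi_1^+)^{-1}(\phi_1^+(\theta_1))$ into $(\phi_2^+)^{-1}(\phi_2^+(F(\theta_1)))$, and $G$ gives the reverse inclusion, so generic fibers are in bijection and the cardinality $k$ transfers.

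The step I expect to require the most care is the bookkeeping of exceptional sets, which is the heart of making ``generically'' precise here. I would record three sources of bad behavior: (i) the indeterminacy loci of $F$ and $G$, (ii) the assumed exceptional set $E_1\subseteq\Theta_1$ off which $\phi_1^+$ is $k$-to-one, and (iii) points whose $\phi_1^+$-fiber touches the indeterminacy locus or $E_1$, so that the bijection between fibers is not entirely within the generic domain. Each of these is cut out by polynomial conditions: the first and second by definition, and the third because the preimage under a polynomial map of a proper algebraic subset is a proper algebraic subset, and fibers of $\phi_1^+$ of generic size $k$ depend algebraically on the parameter. Taking $F$ of the union of these sets, which is still a proper algebraic subset of $\Theta_2$ since $F$ is a birational isomorphism, produces the required exceptional set in $\Theta_2$ off of which $\phi_2^+$ is provably $k$-to-one. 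Once this is spelled out, no further work is needed.
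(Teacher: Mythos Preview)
Your proposal is correct and follows the same overall strategy as the paper: construct a birational correspondence $\Theta_1\leftrightarrow\Theta_2$ intertwining the full-joint parameterizations $\phi_1,\phi_2$, observe that it therefore also intertwines $\phi_1^+,\phi_2^+$, and then do bookkeeping on exceptional sets to transfer the generic $k$-to-one property.

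The one genuine difference is in how the correspondence is built. The paper proves a separate Lemma (Lemma~\ref{lem:homeo}) by invoking Chickering's result that Markov equivalent DAGs differ by a sequence of covered edge reversals, and writes down $\psi$ explicitly for a single covered edge reversal via the elementary identities $P(X_i,X_j\mid W)=P(X_j\mid X_i,W)P(X_i\mid W)$, etc. You instead go directly through the joint distribution, taking $F=\phi_2^{-1}\circ\phi_1$ and using that $\phi_1(\Theta_1)=\phi_2(\Theta_2)$ by Markov equivalence together with the rational invertibility of each $\phi_i$ on a dense open set. Your route is arguably cleaner and more self-contained, since it avoids the combinatorial detour through Chickering; the paper's route, on the other hand, yields an explicit local formula for the reparameterization, which may be useful if one wants to track what happens to specific conditional tables under the equivalence.

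Your handling of the exceptional sets matches the paper's almost exactly: the paper also has to saturate by taking $(\phi_1^+)^{-1}(\phi_1^+(E_1))$ and argue this lies in a proper algebraic subset because $\phi_1^+$ is generically finite-to-one. Your justification of that point (``fibers\dots depend algebraically on the parameter'') is at the same level of detail as the paper's, and both can be made rigorous by passing to Zariski closures.
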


This theorem is a consequence of the following:

\begin{lem}\label{lem:homeo}
With all variables having finite state spaces, consider two Markov equivalent
DAGs, $\mathcal G_1$ and $\mathcal G_2$, with
parameter spaces $\Theta_i$
and parameterization maps $\phi_i$, $i \in \{1,2\}$, for the joint distribution of all variables. Then there
are generic subsets $S_i\subseteq\Theta_i$ and a
rational homeomorphism  $\psi:S_1\to S_2$, with rational inverse,
such that for all $\theta\in S_1$
$$\phi_1(\theta)=\phi_2(\psi(\theta)).$$
\end{lem}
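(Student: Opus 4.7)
The plan is to reduce the general Markov equivalence to a sequence of single covered-edge reversals. Recall that an edge $u\to v$ in a DAG is \emph{covered} if $\pa(v)=\pa(u)\cup\{u\}$, and a classical theorem of Chickering states that any two Markov equivalent DAGs on the same node set are connected by a finite sequence of covered-edge reversals, each of which preserves the Markov equivalence class. Since Markov equivalence in our setting ignores the observable/hidden bipartition, that classical result applies verbatim. It therefore suffices to construct the desired rational homeomorphism $\psi$ for a single covered-edge reversal; the general case follows by composing such maps and restricting the domain to the intersection of the preimages of the individual generic subsets, which is again the complement of a proper algebraic subset of $\Theta_1$.

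Now suppose $\mathcal G_2$ is obtained from $\mathcal G_1$ by reversing a covered edge $u\to v$, so that in $\mathcal G_1$ we have $\pa(u)=W$ and $\pa(v)=W\cup\{u\}$, while in $\mathcal G_2$ we have $\pa(u)=W\cup\{v\}$ and $\pa(v)=W$. All other conditional probability tables are shared between the two parameterizations. Given $\theta\in\Theta_1$, I would define $\psi(\theta)\in\Theta_2$ by leaving every table at a node $w\ne u,v$ unchanged, and using Bayes' rule to set
\begin{align*}
P(X_v\mid X_W) &= \sum_{i}P(X_u=i\mid X_W)\,P(X_v\mid X_u=i,X_W),\\
P(X_u\mid X_v,X_W) &= \frac{P(X_u\mid X_W)\,P(X_v\mid X_u,X_W)}{P(X_v\mid X_W)}.
\end{align*}
These formulas are rational in the entries of $\theta$, and the defining factorization for $\mathcal G_2$ immediately yields $\phi_2(\psi(\theta))=\phi_1(\theta)$ since the two factorizations encode the same joint distribution of $(X_u,X_v)$ conditional on $X_W$, and the remaining factors are identical. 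Let $S_1\subseteq\Theta_1$ be the subset on which every denominator $P(X_v=j\mid X_W=w)$ is strictly positive; this is the complement of a proper algebraic subset of $\Theta_1$, hence generic, and $\psi$ is a rational map on $S_1$.

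For invertibility, I would observe that the same construction applied to the now-covered edge $v\to u$ in $\mathcal G_2$ produces a rational map $S_2\to\Theta_1$ sending $\psi(\theta)$ back to $\theta$, since Bayes' rule is an involution on the two factorizations of $P(X_u,X_v\mid X_W)$ as a product of a conditional and a marginal. This yields the required rational inverse of $\psi$ on a symmetric generic subset $S_2$, and composing finitely many such atomic steps completes the proof. The main obstacle I anticipate is not algebraic but careful bookkeeping: one must verify that after each composition the intersection of excluded loci remains a proper algebraic subset so that the genericity hypothesis is preserved throughout. Since each step excludes only finitely many polynomial equations (vanishing of marginal probabilities and of Jacobians of Bayes' rule), and finite unions of proper algebraic subsets remain proper, this issue is manageable but must be tracked explicitly when chaining the Chickering sequence.
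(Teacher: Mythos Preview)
Your proposal is correct and follows essentially the same route as the paper: reduce to a single covered-edge reversal via Chickering's theorem, define $\psi$ by the Bayes-rule rewriting of the two local conditional probability tables, note that the same construction in reverse gives the inverse, and compose for a chain of reversals. The only cosmetic difference is that the paper takes $S_1$ to be the locus where the \emph{input} tables $P(X_u\mid X_W)$ and $P(X_v\mid X_u,X_W)$ are strictly positive (which automatically forces positivity of the derived marginal and of the image parameters, making the symmetry immediate), whereas you phrase $S_1$ in terms of the nonvanishing of the derived denominator; your aside about excluding Jacobian loci is unnecessary, since the explicit rational inverse already certifies the homeomorphism.
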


\begin{proof}
Recall that an edge $i\to j$ of a DAG is said to be covered if $\pa(j)=\pa(i)\cup\{i\}$.
By \citet{Chickering95}, Markov equivalent DAGs differ by
applying a sequence of reversals of covered edges.

We thus first assume the $\mathcal G_i$ differ by the reversal of a
single covered edge $i\to j$ of $\mathcal G_1$.
Let $W=\pa_{\mathcal G_1}(i)=\pa_{\mathcal G_2}(j)$, so
$\pa_{\mathcal G_1}(j)=W\cup\{i\}$, $\pa_{\mathcal G_2}(i)=W\cup\{j\}$.
Now any $\theta\in \Theta_1$ is a collection of conditional probabilities
$P(X_v|X_{\pa(v)})$, including $P(X_i|W), P(X_j|X_i,W)$.
From these, successively define
\begin{align*}P(X_i,X_j|W)&=P(X_j|X_i,W)P(X_i|W),\\
P(X_j|W)&=\sum_{k}P(X_i=k,X_j|W),\\
P(X_i | X_j, W)&=P(X_i,X_j | W)/ P(X_j|W).
\end{align*}
Using these last two conditional probabilities, along with those
specified by $\theta$ for all $v\ne i,j$, define
parameters $\psi(\theta)\in \Theta_2$.
Now $\psi$ is defined and continuous on  the set
$S_1$
where
$P(X_i|W)$ and $P(X_j|X_i,W)$ are strictly positive.

One easily checks that the same construction applied
to the edge $j\to i$ in $\mathcal G_2$ gives the inverse map.

If $\mathcal G_1,\mathcal G_2$ differ by a sequence of
edge reversals, one defines the $S_i$ as subsets where
all parameters related to the reversed edges are strictly positive, and
let $\psi$ be the composition of the maps for the individual reversals.
\end{proof}

\begin{proof}[Proof of Theorem \ref{thm:equivDAGs}]  
Suppose $\Theta_1$ has a generic subset $S$ on which both $\phi_1^+$ is $k$-to-one and the map $\psi$ of Lemma \ref{lem:homeo} is invertible.
Then  $\psi(S)$ will be a generic subset of  $\Theta_2$, and 
the identity
$$\phi^+_2(\theta)=\phi^+_1(\psi^{-1}(\theta))$$
from Lemma \ref{lem:homeo} shows that $\phi^+_2$ is $k$-to-one on $\phi(S)$.
Thus we need only establish the existence of such an $S$.

Let $S_1=\Theta_1\smallsetminus E_1$,
$S_2=\Theta_2\smallsetminus E_2$ be the generic sets of Lemma  \ref{lem:homeo}. Let $S_1'=\Theta_1\smallsetminus E_1'$
be a generic set on which $\phi_1^+$ is $k$-to-one. We may thus assume $E_1,E_1', E_2$ are all proper algebraic subsets.
Since $\phi^+_1$ is generically $k$-to-one with finite $k$, the set $(\phi_1^+)^{-1}(\phi_1^+(E_1))$ must be contained in a proper algebraic subset of $\Theta_1$, say $E_1''$.
We may therefore take $S=\Theta_1\smallsetminus (E_1'\cup E_1'')$.
\end{proof}

\section{Two special models}\label{sec:special}

In this section, we explain how one may explicitly solve for parameter
values from a joint distribution of the observable variables for models specified by two specific DAGs with hidden nodes.

Parameter identifiability of the model with DAG shown in Figure  \ref{fig:kruskal}, is an instance of a more general theorem of
\citet{Kruskal77}.
See also \citep{StegSid,JRkruskal}.
However, known proofs of the full Kruskal theorem do not yield an explicit procedure for recovering
parameters.
Nonetheless, a proof of a restricted theorem (the essential idea of which is not
original to this work, and has been rediscovered several times)
does.
We include this argument for Theorem \ref{thm:Kruskal} below,
since it is still not widely known and provides motivation for
the approach to the proof of Theorem \ref{thm:caseb} for models associated to a second DAG, shown in Figure \ref{fig:second}.
Our analysis of the second model appears to be entirely novel.
For both models, we
characterize the exceptional parameters for which these procedures
fail, giving a precise characterization of a set containing all
non-identifiable parameters.

\subsection{Explicit cases of Kruskal's Theorem}

The model we consider has the DAG of model 3-0 in Table
\ref{tb:results}, also shown in Figure \ref{fig:kruskal} for convenience.
\begin{figure}[h]
\begin{center}
\includegraphics[height=2cm]{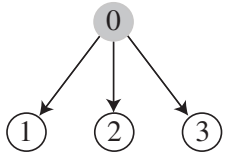}
\end{center}
\caption{The DAG of model 3-0, the Kruskal model} \label{fig:kruskal}
\end{figure}

Parameters for the model are:
\begin{enumerate}
\item $\mathbf p_0=P(X_0)\in  \Delta^{n_0-1}$,  a stochastic vector giving the
distribution for the $n_0$-state hidden variable $X_0$.
\item For each of $i=1,2,3$, a $n_0\times n_i$ stochastic matrix $M_i=P(X_i|X_0)$.
\end{enumerate}

We use the following terminology.
\begin{defn}
The \emph{Kruskal row rank} of a matrix $M$ is the maximal number
$r$ such that every set of $r$ rows of $M$ is linearly independent.
\end{defn}
Note that the Kruskal row rank of a matrix may be less than its rank,
which is the maximal $r$ such that \emph{some} set of $r$ rows is independent.

Our special case of Kruskal's Theorem is the following:

\begin{thm} \label{thm:Kruskal} Consider the model represented by the DAG
of model 3-0, where variables $X_i$ have $n_i \ge 2$ states, with $n_1,n_2\ge n_0$. 
Then generic parameters of the model are identifiable up to
label swapping, and an algebraic procedure for determination of the parameters
from the joint probability distribution $P(X_1,X_2,X_3)$ can be given.

More specifically, if $\mathbf p_0$ has no zero entries, $M_1,M_2$ have
 rank $n_0$, and $M_3$ has Kruskal row rank at least 2, then the parameters can
 be found through determination of the roots of certain $n_0$-th degree univariate
 polynomials and solving linear equations. The coefficients of these polynomials
 and linear systems are rational expressions in the joint distribution.
\end{thm}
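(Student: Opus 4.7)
The plan is to reduce identification to a generalized eigenvalue problem on matrices built from slices of $P(X_1,X_2,X_3)$. Slicing the three-way array along $X_3$, the $n_1\times n_2$ matrix $T_k$ with $(i,j)$ entry $P(X_1=i,X_2=j,X_3=k)$ factors as $T_k=M_1^T D_k M_2$ with $D_k=\diag(p_0(h)M_3(h,k))_h$. For any $\mathbf v\in\RR^{n_3}$, set $M(\mathbf v)=\sum_k v_k T_k = M_1^T D(\mathbf v) M_2$, where $D(\mathbf v)=\diag(p_0(h)(M_3\mathbf v)(h))$. The hypotheses on $\mathbf p_0, M_1, M_2$ force both $M(\mathbf 1)=\sum_k T_k$ and $M(\mathbf v)$ (for generic $\mathbf v$) to have rank exactly $n_0$, so $n_0=\rank(\sum_k T_k)$ is already visible in the data.

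For a second test vector $\mathbf w$ I would form the $n_1\times n_1$ matrix $E(\mathbf v,\mathbf w)=M(\mathbf v)\, M(\mathbf w)^+$ using the Moore--Penrose pseudoinverse. Since $M_1^T D(\mathbf w)$ has full column rank and $M_2$ has full row rank, the full-rank-factorization identity for the pseudoinverse together with $M_2M_2^+=I_{n_0}$ and $(M_1^T)^+M_1^T=I_{n_0}$ yields
$$E(\mathbf v,\mathbf w)=M_1^T\bigl[D(\mathbf v)D(\mathbf w)^{-1}\bigr](M_1^T)^+.$$
This rank-$n_0$ matrix has nonzero eigenvalues $\lambda_h=(M_3\mathbf v)(h)/(M_3\mathbf w)(h)$ with eigenvectors the columns of $M_1^T$. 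Its characteristic polynomial factors as $\lambda^{n_1-n_0}q(\lambda)$ with $q$ of degree $n_0$ having coefficients rational in the joint distribution, and the $\lambda_h$ are its roots. The Kruskal row rank hypothesis (no two rows of $M_3$ proportional) ensures that for generic $\mathbf v,\mathbf w$ these $n_0$ roots are pairwise distinct; each eigenspace is then one-dimensional, so each column of $M_1^T$ is recovered up to scale by a linear system, and normalization to sum one pins it down. The only remaining ambiguity is the ordering of the $n_0$ columns, which is precisely label swapping of hidden states.

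Applying the same argument to the transposed slices $T_k^T=M_2^T D_k M_1$ recovers $M_2$. With $M_1,M_2$ in hand, each equation $T_k=M_1^T D_k M_2$ becomes a linear system whose unique solution is $D_k$; summing over $k$ produces $\mathbf p_0$, and division recovers $M_3$. The principal obstacle, and the main technical work, is isolating the exceptional parameter set: genericity is needed for $\rank(M_i)=n_0$ ($i=1,2$), for $p_0(h)\ne 0$, for $D(\mathbf w)$ to be invertible, for the $\lambda_h$ to be pairwise distinct, and for the normalizing column sums of each eigenvector to be nonzero. Each failure is the vanishing of a single polynomial in $\theta$ (and the chosen test vectors), so their union forms a proper algebraic subset of $\Theta$. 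This both verifies generic identifiability up to label swapping and provides the explicit description of the non-identifiable locus that the theorem promises.
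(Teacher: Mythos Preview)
Your argument follows the same core strategy as the paper's proof: write the $X_3$-slices of the observable array as $M_1^T\,\diag(\cdot)\,M_2$, form a product of one slice-combination against the inverse of another, and read off the rows of $M_i$ as eigenvectors of the resulting matrix. The paper works in the square case $n_1=n_2=n_0$ using $P_{\cdot\cdot+}^{-1}P_{\cdot\cdot i}=M_2^{-1}\diag(M_3(\cdot,i))M_2$ and handles $n_1,n_2>n_0$ by restricting to $n_0\times n_0$ subarrays; you instead keep the rectangular slices and use the Moore--Penrose pseudoinverse together with generic test vectors $\mathbf v,\mathbf w$. Your variant is a little slicker in that it treats the non-square case uniformly and forces simple eigenvalues directly, whereas the paper invokes a simultaneous-eigenspace argument across several slices; the paper's version, on the other hand, ties the exceptional set cleanly to the stated hypotheses (rank of $M_1,M_2$, Kruskal row rank of $M_3$) without auxiliary choices of $\mathbf v,\mathbf w$.

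Two small points deserve attention. First, you recover $M_1$ and $M_2$ by two separate eigenproblems but do not say how their row orderings are made consistent; this is easily fixed (use the same $\mathbf v,\mathbf w$ for both and match rows by their common eigenvalues $\lambda_h$), but it should be stated, since this is exactly where the paper uses the distinctness of the rows of $M_3$ to pin down a single labelling. Second, your claim that the coefficients are rational in the joint distribution relies on the entries of $M(\mathbf w)^+$ being rational in those of $M(\mathbf w)$; this is true on the locus of matrices of fixed rank $n_0$, but it is not entirely obvious and merits a remark (the paper sidesteps this by inverting honest square submatrices).
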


\begin{proof} For simplicity, consider first the case $n_0=n_1=n_2=n$.
Let $P=P(X_1,X_2,X_3)$ be a probability distribution of observable variables arising from the model, viewed as a $n\times n\times n_3$ array.

Marginalizing $P$ over $X_3$ (\emph{i.e.}, summing over the 3rd index), we obtain a matrix which, in terms of the unknown parameters, is the matrix product
$$P_{\cdot\cdot+}=P(X_1,X_2)= M_1^T\diag(\mathbf p_0)M_2.$$
Similarly, if $M_3=(m_{ij})$, then the slices of $P$ with  third index fixed at $i$ (\emph{i.e.,} the conditional distributions given $X_i=i$, up to normalization) are
\begin{equation*}P_{\cdot\cdot i}=P(X_1,X_2,X_3=i)=M_1^T\diag(\mathbf p_0)\diag (M_3(\cdot,i))M_2,\end{equation*}
where $M_3(\cdot,i)$ is the $i$th column of $M_3$.

Assuming $M_1, M_2$ are non-singular, and $\mathbf p_ 0$ has no zero entries, $P_{\cdot\cdot+}$ is invertible and we see
\begin{equation}P_{\cdot\cdot+}^{-1}P_{\cdot\cdot i}=M_2^{-1}\diag (M_3(\cdot,i))M_2.\label{eq:eigprod}\end{equation}
Thus the entries of the columns of $M_3$ can be determined (without
order) by finding the eigenvalues of the
$P_{\cdot\cdot+}^{-1}P_{\cdot\cdot i}$, and the rows of $M_2$ can be
found by computing the corresponding left eigenvectors,
normalizing so the entries add to 1. (If $M_3$ has repeated entries
in the $i$th column, the eigenvectors   may not be uniquely
determined. However, since the matrices
$P_{\cdot\cdot+}^{-1}P_{\cdot\cdot i}$ for various $i$ commute, and
$M_3$ has Kruskal row rank 2 or more, the set of these matrices do
uniquely determine a collection of simultaneous 1-dimensional
eigenspaces. We leave the details to the reader.) This determines
$M_2$ and $M_3$, up to the simultaneous ordering of their rows.

A similar calculation with $P_{\cdot\cdot i}P_{\cdot\cdot+}^{-1}$
determines $M_1$, and $M_3$, up to the row order. Since the rows of
$M_3$ are distinct (because it has Kruskal rank 2), fixing some
ordering of them fixes a consistent order of the rows of all of the
$M_i$.

Finally, one determines $\mathbf p_0$ from $M_1^{-T}P_{\cdot\cdot+}M_2^{-1}=\diag(\mathbf p_0)$.

\smallskip
The hypotheses on the rank and Kruskal rank of the parameter matrices can be expressed
through the non-vanishing of minors, so
all assumption on parameters used in this
procedure can be phrased as the non-vanishing of certain
polynomials. As a result, the exceptional set where it cannot be performed
is contained in a proper algebraic subset of the parameter set.

Since the computations to perform the procedure involve computing
eigenvalues and eigenvectors of matrices whose entries are rational
in the joint distribution, the second paragraph of the theorem is
justified.

\smallskip

In the more general case of $n_1,n_2\ge n_0$, one can apply the argument above to $n_0\times n_0\times n_3$ subarrys of $P$ corresponding to submatrices  of $M_1$ and $M_2$ that are invertible. 
All such subarrays  will lead to the same eigenvalues of the matrices analogous to those of equation \eqref{eq:eigprod}, so eigenvectors can be matched up  to reconstruct entire rows of $M_1$ and $M_2$.
The vector $\mathbf p_0$ is determined by a formula similar to that above, using a subarray of the marginalization $P_{\cdot\cdot +}$.
 \end{proof}

\subsection{Another special model}
The model we consider next has the DAG of model 4-3b in Table 
\ref{tb:results}, reproduced in Figure \ref{fig:second} for convenience.
\begin{figure}[h]
\begin{center}
\includegraphics[height=2cm]{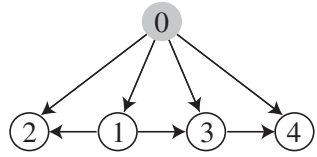}
\end{center}
\caption{The DAG of model 4-3b.} \label{fig:second}
\end{figure}

Parameters for the model are:
\begin{enumerate}
\item $\mathbf p_0=P(X_0)\in  \Delta^{n_0-1}$, a stochastic vector
giving the distribution for the $n_0$-state hidden variable $X_0$.
\item Stochastic matrices $M_1=P(X_1|X_0)$ of size $n_0\times n_1$;
$M_i=P(X_i|X_0,X_1)$ of size $n_0n_1\times n_i$ for $i=2,3$; and $M_4=P(X_4|X_0,X_3)$ of size $n_0n_3\times n_4 $.
\end{enumerate}

\begin{thm} \label{thm:caseb} Consider the model
represented by the DAG of model 4-3b, where variables
$X_i$ have $n_i\ge 2$ states, with $n_2,n_4\ge n_0$. Then generic
parameters of the model are identifiable up to label swapping, and
an algebraic procedure for determination of the parameters from the
joint probability distribution $P(X_1,X_2,X_3,X_4)$ can be given.

More specifically, suppose $\mathbf p_0,M_1,M_3$ have no zero entries, 
the $n_0\times n_2$ and  $n_0\times n_4$ matrices
\begin{align*}
M_2^i&=P(X_2|X_0,X_1=i),\ 1\le i\le n_1,\text{ and}\\
M_4^j&=P(X_4|X_0,X_3=j),\ 1\le j\le n_3
\end{align*}
 have rank $n_0$,  and
 there exists some $i, i'$ with $1\le i<i'\le n_1$ such that for all $1\le j<j'<n_3$, $1\le k<k'\le n_4$
 the entries of $M_3$ satisfy inequality \eqref{eq:badM3} below. Then from the resulting
 joint distribution the parameters can be found through determination of the
 roots of certain $n$-th degree univariate polynomials and solving linear equations.
 The coefficients of these polynomials and linear systems are rational expressions
 in the entries of the joint distribution.
\end{thm}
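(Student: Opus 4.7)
The plan is to mimic the Kruskal-style spectral argument from Theorem \ref{thm:Kruskal}, modified to accommodate the extra edge $X_3\to X_4$ by pairing the two values $i,i'$ of $X_1$ from the hypothesis so that the $X_3$-dependence of $M_4$ cancels. For each $j\in\{1,\dots,n_3\}$, form the $n_2\times n_4$ matrices
\[ B^{ij}_{a,c}=P(X_2=a,X_3=j,X_4=c\mid X_1=i),\qquad B^{i'j}_{a,c}=P(X_2=a,X_3=j,X_4=c\mid X_1=i'). \]
Using $X_4\perp X_1\mid X_0,X_3$ together with the DAG factorization, these decompose as
\[ B^{ij}=(M_2^i)^T\diag(\mathbf r^{ij})M_4^j,\qquad B^{i'j}=(M_2^{i'})^T\diag(\mathbf r^{i'j})M_4^j, \]
where $\mathbf r^{ij}(k)=P(X_0=k\mid X_1=i)\,M_3(k,i;j)$. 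First treating the square case $n_2=n_4=n_0$ (the general case passes to invertible $n_0\times n_0$ submatrices as in Theorem \ref{thm:Kruskal}), the common right factor $M_4^j$ cancels in
\[ C^j:=B^{ij}(B^{i'j})^{-1}=(M_2^i)^T\diag\bigl(\rho^{(j)}\bigr)(M_2^{i'})^{-T},\qquad \rho^{(j)}_k=\mathbf r^{ij}(k)/\mathbf r^{i'j}(k). \]

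For any two indices $j\ne j'$, the $(M_2^{i'})^T$ factors cancel in $C^j(C^{j'})^{-1}$, leaving the clean similarity form
\[ C^j(C^{j'})^{-1}=(M_2^i)^T\diag\bigl(\rho^{(j)}/\rho^{(j')}\bigr)[(M_2^i)^T]^{-1}. \]
After the $P(X_0=k\mid X_1=i)/P(X_0=k\mid X_1=i')$ factors cancel, the $k$th eigenvalue becomes the $M_3$ cross-ratio $M_3(k,i;j)M_3(k,i';j')/\bigl(M_3(k,i;j')M_3(k,i';j)\bigr)$, and the right eigenvectors are the columns of $(M_2^i)^T$. The inequality \eqref{eq:badM3} is exactly the requirement that these eigenvalues be distinct across the hidden states $k$, so the eigendecomposition recovers the columns of $(M_2^i)^T$ up to a single permutation of hidden states; since all of the matrices $C^j(C^{j'})^{-1}$ are simultaneously diagonalized in the same basis, a consistent labeling is produced. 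Normalizing each column so the corresponding row of $M_2^i$ sums to $1$ fixes scaling, and the symmetric construction $(C^j)^{-1}C^{j'}$ recovers $(M_2^{i'})^T$ with the same hidden-state ordering.

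With $(M_2^i)^T$ in hand, apply its inverse on the left of $B^{ij}$ to read off $\diag(\mathbf r^{ij})M_4^j$ for every $j$; the row sums then yield $\mathbf r^{ij}$ (since $M_4^j$ is row-stochastic), and normalization gives $M_4^j$. Summing $\mathbf r^{ij}$ over $j$ returns $\mathbf p^i:=P(X_0\mid X_1=i)$ (since $M_3^i$ is row-stochastic), and division recovers $M_3(k,i;j)=\mathbf r^{ij}(k)/\mathbf p^i(k)$. For each remaining $\alpha\in\{1,\dots,n_1\}$, the now-known $M_4^j$ is used in the identity $B^{\alpha j}(M_4^j)^{-1}=(M_2^\alpha)^T\diag(\mathbf r^{\alpha j})$, from which $(M_2^\alpha)^T$, $\mathbf r^{\alpha j}$, $\mathbf p^\alpha$, and $M_3(k,\alpha;j)$ are extracted by the same bookkeeping. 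Finally, the observable marginal $P(X_1=\alpha)$ yields $\mathbf p_0(k)=\sum_\alpha P(X_1=\alpha)\mathbf p^\alpha(k)$, and Bayes inversion gives $M_1(k,\alpha)=P(X_1=\alpha)\mathbf p^\alpha(k)/\mathbf p_0(k)$.

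The main obstacle is verifying that the algebraic preconditions for each step—the various invertibilities and the distinctness of the eigenvalues of $C^j(C^{j'})^{-1}$—fail only on a proper algebraic subset of $\Theta$. The invertibility hypotheses reduce to the non-vanishing of $n_0\times n_0$ minors of $M_2^i$ and $M_4^j$, the non-vanishing of $\mathbf p^i$ (guaranteed by the no-zero conditions on $\mathbf p_0$ and $M_1$), and the non-vanishing of $\mathbf r^{ij}$ (guaranteed by the additional no-zero assumption on $M_3$); the distinctness of eigenvalues is precisely the content of \eqref{eq:badM3}. Every step apart from the eigendecomposition is rational in the entries of the joint distribution, and the eigendecomposition amounts to factoring a univariate polynomial of degree $n_0$, so the second assertion of the theorem follows. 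The only residual ambiguity is the order in which the $n_0$ eigenvectors are produced—exactly the label swapping of hidden states—so the parameters are identifiable up to label swapping.
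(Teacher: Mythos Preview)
Your argument is correct and follows essentially the same spectral strategy as the paper: form a four-term product of the slices $P(X_1{=}\cdot,X_2,X_3{=}\cdot,X_4)$ so that two of the three factors $M_2^i$, $D_{i,j}$, $M_4^j$ cancel and the remaining one appears as an eigenvector matrix, with eigenvalues given by the $M_3$ cross-ratios whose distinctness is precisely \eqref{eq:badM3}. The paper uses $P_{i,j}^{-1}P_{i,j'}P_{i',j'}^{-1}P_{i',j}$, which leaves $M_4^j$ as the (left) eigenvector factor and hence recovers $M_4^j$ first; your product $C^j(C^{j'})^{-1}=B^{ij}(B^{i'j})^{-1}B^{i'j'}(B^{ij'})^{-1}$ is the dual rearrangement, leaving $(M_2^i)^T$ as the (right) eigenvector factor and recovering $M_2^i$ first. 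After that, both proofs unwind the remaining parameters by successively stripping known factors from the slices; your bookkeeping via row/column sums of stochastic matrices is equivalent to the paper's direct recovery of the diagonal $D_{i,j}$. Your use of the conditioned slices $B^{ij}$ rather than the unnormalized $P_{i,j}$ is immaterial, since the factors $P(X_1{=}i)$ cancel in the four-term product.
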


\begin{proof} Consider first the case $n_0=n_2=n_4=n$.
With $P=P(X_1,X_2,X_3,X_4)$ viewed as an $n_1\times n\times n_3\times n$ array,
we work with $n\times n$ `slices' of $P$, $$P_{i,j}= P(X_1=i,X_2,X_3=j,X_4),$$
\emph{i.e.}, we essentially condition on $X_1,X_3$,  though omit the normalization.

Note that these slices can be expressed as
\begin{equation} P_{i,j}= (M_2^i)^T
D_{i,j}M_4^j,\label{eq:fact} \end{equation}
where
$D_{i,j}=\diag(P(X_0,X_1=i,X_3=j))$ is the diagonal matrix given in terms of parameters by
$$D_{i,j}(k,k)=\mathbf p_0(k) M_1(k,i)M_3((k,i),j),$$
and
$M_2^i$ and  $M_4^j$ are as in the statement of the Theorem.

Equation \eqref{eq:fact} implies for $1\le i,i'\le n_1$ and  $1\le j,j'\le n_3$ that
\begin{equation}P_{i,j}^{-1} P_{i,j'}P_{i',j'}^{-1} P_{i',j }=\\
(M_4^j)^{-1}   D_{i,j}^{-1}D_{i,j'}D_{i',j'} ^{-1} D_{i',j} M_4^j,
\label{eq:prod}\end{equation}
and
the hypotheses on the parameters imply the needed invertibility.
But this shows the rows of $M_4^j$ are left eigenvectors of this product.

In fact, if $i\ne i'$, $j\ne j'$, then the eigenvalues of this product are distinct,
for generic parameters. To see this, note the eigenvalues are
\begin{equation}M_3((k,i),j')M_3((k,i'),j)/( M_3((k,i),j)M_3((k,i'),j')),\label{eq:eigs}
\end{equation}
for $1\le k\le n$, so distinctness of eigenvalues is equivalent to 
\begin{multline}
M_3((k,i),j')M_3((k,i'),j) M_3((k',i),j)M_3((k',i'),j')\\ \ne  M_3((k,i),j)M_3((k,i'),j')M_3((k',i),j')M_3((k',i'),j),\label{eq:badM3}
\end{multline}
for all $1\le k< k'\le n$. 
Thus a generic choice of $M_3$ leads to distinct eigenvalues.

With distinct eigenvalues, the eigenvectors
are determined up to scaling. But since each row of $M_4^j$ must sum to 1,
the rows of $M_4^j$ are therefore determined by $P$.

The ordering of the rows of the $M_4^j$ has not yet been determined.
To do this, first fix an arbitrary ordering of the rows of $M_4^1$, say, which
imposes an arbitrary labeling of the states for $X_0$. Then using equation \eqref{eq:fact},
from $P_{i,1}(M_4^1)^{-1}$ we can determine $D_{i,1}$ and $M_2^i$ with their
rows ordered consistently with
 $M_4^1$.
For $j\ge 1$, using equation
\eqref{eq:fact} again, from $(M_2^i)^{-T}P_{i,j}$ we can determine
$D_{i,j}$ and $M_4^j$ with a consistent row order.
Thus $M_2$ and $M_4$ are determined.

\medskip

To determine the remaining parameters, again appealing to equation \eqref{eq:fact},
we can recover the distribution $P(X_0,X_1,X_2)$ using
 $$(M_2^i)^{-T} P_{i,j}(M_4^j)^{-1}=\diag( P(X_0, X_1=i,X_3=j)).$$
With $X_0$ no longer hidden, it is straightforward to determine the remaining parameters.

The general case of $n_0\le n_2,n_4$, is handled by considering subarrays, just as in the proof of the preceding theorem.
\end{proof}

\begin{rem}
In the case of all binary variables, the expression in \eqref{eq:eigs} is
just the conditional odds ratio
for the observed variables $X_1,X_3$, conditioned on $X_0$. The
inequality \eqref{eq:badM3} can thus be interpreted as saying there is a
non-zero 3-way interaction between the variables $X_0,X_1,X_2$, which
is the generic situation.
\end{rem}

\section{Small binary DAG models}\label{sec:binary}

All variables are assumed binary throughout this section.
In Table \ref{tb:results} of the Appendix,
we list each of the binary DAG models with one latent node which is parental to up to 4 observable nodes.
We number the graphs as $A$-$Bx$ where $A=|O|=|V|-1$ is the number
of observed variables,  $B=|E|-|O|$ is the number of directed edges
between the observed variables, and $x$ is a letter appended to distinguish
between several graphs with these same features. As the table presents
only the case that all variables are binary, the observable distribution lies in a space 
of dimension $2^A-1$.

The primary information in this table is in the column for $k$, indicating the
parameterization map is generically $k$-to-one.
As discussed in the introduction, the existence of such a $k$ is not obvious, and does not follow
from the behavior of general polynomial maps in real variables.

The models 4-3e and 4-3f, for which the parameterization maps are
generically 4-to-one, are particularly interesting cases, as for these models
there are non-identifiability issues that arise neither from overparameterization
(in the sense of a parameter space of larger dimension than the distribution space)
nor from label swapping.
While these models are ones that can plausibly be imagined as being used for
data analysis, they have a rather surprising failure of identifiability, which
is explored more precisely in Section \ref{sec:4to1}.

\medskip

We now turn to establishing the  results in Table
\ref{tb:results}. 

For many of the models $A$-$Bx$ the dimension of the parameter space
computed by equation \eqref{eq:dimtheta} exceeds the dimension
$2^A-1$ of the probability simplex in which the joint distribution
of observed variables lies. In these cases, the following
Proposition applies to show the parameterization is generically
infinite-to-one. We omit its proof for brevity.

\begin{prop}\label{prop:infinite}
Let $f:S \to \RR^m$ be any map defined by real polynomials, where
$S$ is an open subset of $\RR^n$ and $n>m$. Then $f$ is generically
infinite-to-one.
\end{prop}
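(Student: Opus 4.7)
The plan is to use the rank of the Jacobian together with the constant rank theorem to exhibit a positive-dimensional submanifold inside a generic fiber.

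First, consider the Jacobian $Df(x)$ of $f$, an $m \times n$ matrix with polynomial entries, and set $r = \max_{x \in S} \rank Df(x)$. Since this matrix has $m$ rows and $n > m$, we have $r \le m < n$. Let $U = \{x \in S : \rank Df(x) = r\}$, which is the complement in $S$ of the common zero locus of the $r \times r$ minors of $Df$. Because $r$ is attained somewhere on $S$, at least one $r \times r$ minor is a polynomial that does not vanish identically on $S$, so $S \setminus U$ is contained in a proper algebraic subset of $\RR^n$; in the paper's terminology, $U$ is a generic subset of $S$.

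Second, on $U$ the map $f$ is smooth with constant rank $r$, so the constant rank theorem applies: for each $x_0 \in U$ there exist a neighborhood $V \subseteq U$ of $x_0$ and smooth coordinates on $V$ and near $f(x_0)$ in which $f|_V$ becomes the projection onto the first $r$ coordinates. Consequently $f^{-1}(f(x_0)) \cap V$ is a smooth submanifold of dimension $n - r \ge n - m \ge 1$, and therefore contains infinitely many points. Hence $f^{-1}(f(x_0))$ is infinite for every $x_0 \in U$, which shows that $f$ is generically infinite-to-one.

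The argument is short, and the main point requiring care is keeping everything in the real category. One could be tempted to invoke the complex algebraic fiber dimension theorem applied to the complexification $\CC^n \to \CC^m$, but that would leave open whether generic complex positive-dimensional fibers intersect $\RR^n$ in infinitely many real points. The Jacobian-plus-constant-rank-theorem route sidesteps this by producing a \emph{real} submanifold of positive dimension directly, and the genericity of $U$ is automatic because it is the non-vanishing locus of a polynomial (namely, some fixed $r \times r$ minor of $Df$).
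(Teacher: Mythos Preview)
The paper explicitly omits the proof of this proposition (``We omit its proof for brevity''), so there is nothing to compare against directly. Your argument via the Jacobian and the constant rank theorem is correct and is a standard route: the locus $U\subseteq S$ where $Df$ attains its maximal rank $r\le m<n$ is the complement in $S$ of a proper real algebraic set, and on $U$ the constant rank theorem exhibits each fiber locally as a real submanifold of dimension $n-r\ge 1$, hence infinite. Your closing remark about why one must stay in the real category rather than pass to $\CC$ is also apt and worth keeping.
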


This proposition applies to all models in Table \ref{tb:results}
with an infinite-to-one parameterization, with the single exception
of 4-2a. For that model, amalgamating $X_1$ and $X_2$ together, and
likewise  $X_3$ and $X_4$, we obtain a model with two 4-state
observed variables that are conditionally independent given a binary
hidden variable $X_0$. One can show that the probability distributions for this model
forms  an 11-dimensional object,
and then a variant of Proposition \ref{prop:infinite} applies.

For models  3-0 and 4-3b (and the Markov equivalent 4-3a), specializing Theorems \ref{thm:Kruskal} and \ref{thm:caseb}
of the previous section to binary variables
yields the claims in the table.

For the remaining models, the strategy is to first marginalize or condition on an
observable variable
to reduce the model to one already understood. One then attempts to
`lift' results on the reduced model back to the original one.

We consider in detail only some of the models, indicating how the
arguments we give can be adapted to others with minor modifications.

\subsection{Model 4-1}

\begin{figure}[h]
\begin{center}
\includegraphics[height=2cm]{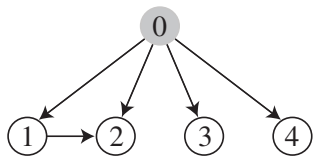} 
\end{center}
\caption{The DAG of model 4-1.}\label{fg:4.1}
\end{figure}

Referring to Figure \ref{fg:4.1}, since node 2 is a sink, marginalizing over $X_2$ gives an instance
of model 3-0 with the same parameters,
after discarding $P(X_2|X_0,X_1)$.
Thus generically all parameters except
$P(X_2|X_0,X_1)$ are determined, up to label swapping.

But note that if the (unknown) joint distribution of
$X_0,X_1,X_2,X_3$ is written as an $8\times 2$ matrix $U$, with $$U
((i,j,k),\ell)=P(X_0=\ell, X_1=i,X_2=j,X_3=k ),$$ and
$M_4=P(X_4|X_0)$, then the matrix product  $UM_4$ has entries
$$(UM_4)((i,j,k),\ell)=P(X_1=i,X_2=j,X_3=k, X_4=\ell),$$
which form the observable joint distribution. Since generically
$M_4$ is invertible, from the observable distribution and each of
the already identified label swapping variants of $M_4$ we can find $U$. From $U$ we
marginalize to obtain $P(X_0,X_1,X_2)$ and $P(X_0,X_1)$. Under the
generic condition that $P(X_0), P(X_1|X_0)$ are strictly positive,
$P(X_0,X_1)$ is as well, and so we can compute
$P(X_2|X_0,X_1)=P(X_0,X_1,X_2)/P(X_0,X_1)$.

Models 4-0 and 4-2d are handled similarly, by marginalizing over the sink nodes 4 and 3, respectively.

An alternative argument for model 4-1  and 4-0 proceeds by
amalgamating the observed variables, $X_1,X_2$, into a single
4-state variable, and applying Theorem \ref{thm:Kruskal} directly to that
model. We leave the details to the reader.

\subsection{Models 4-2b,c} Up to renaming of nodes, the DAGs for models 4-2b and 4-2c are Markov equivalent.
Thus by Theorem \ref{thm:equivDAGs}, it is enough to consider model
4-2c, as shown in Figure \ref{fg:4.2c}.

\begin{figure}[h]
\begin{center}
\includegraphics[height=2cm]{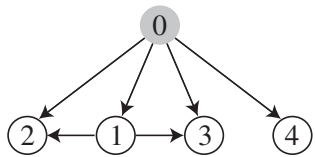} 
\end{center}
\caption{The DAG of model 4-2c.}\label{fg:4.2c}
\end{figure}

We condition on $X_1=j$, $j=1,2$ to
obtain two related models. Letting $X_i^{(j)}$ denote the
conditioned variable at node $i$, the resulting observable
distributions  are
\begin{multline*}
P(X_2^{(j)},X_3^{(j)},X_4^{(j)})=P(X_2,
X_3,X_4~|~ X_1=j)\\=P(X_1=j)^{-1} P(X_1=j,X_2, X_3,X_4).
\end{multline*}
With a hidden variable
$ X_0^{(j)}$ and observed variables $ X_2^{(j)},  X_3^{(j)},
X_4^{(j)}$, these distributions arise from a DAG like that of
model 3-0.  With parameters for the original model $\mathbf
p_0=P(X_0)$, $2\times 2$
 matrices $M_i=P(X_i|X_0)$ for $i=1,4$, and $2\times 4$ matrices
 $M_i=P(X_i~|~X_0,X_1)$, $i=2,3$
and ${\mathbf e_j}$ the standard basis vector,
parameters for the conditioned models are:
\begin{enumerate}
\item the vector
\begin{align*}\mathbf p_0^{(j)}&=P(X_0^{(j)})= P(X_0|X_1=j)\\
 &=P(X_1=j)^{-1} P(X_0,X_1=j)\\
 &= \frac 1{\mathbf p_0^TM_1\mathbf e_j}(\diag(\mathbf p_0)M_1\mathbf e_j),\end{align*}
\item the $2\times 2$ stochastic matrix $M_4^{(i)}=P(X_4^{(i)}|X_0^{(i)})=M_4$,
and
\item for $i=2,3$, the $2\times 2$ stochastic matrix
$M_i^{(j)}=P( X_i^{(j)}| X_0^{(j)})$, whose rows are the $(0,j)$ and $(1,j)$ rows  of $M_i$.
\end{enumerate}

Now if $\mathbf p_0$ and column $j$ of $M_1$ have non-zero entries,
it follows that $\mathbf p_0^{(j)}$ has no zero entries. If
additionally $ M_2^{(j)},  M_3^{(j)}, M_4$ all have rank 2,  by
Theorem \ref{thm:Kruskal} the parameters of these conditioned models
are identifiable, up to the labeling of the states of the hidden
variable. As these assumptions are generic conditions on the
parameters of the original model, we can generically identify the
parameters of the conditioned models.

In particular, $M_4$ can be identified up to reordering its rows, and is invertible. But let
$U$ denote the (unknown) $8\times2$ matrix with
$U((i,j,k),\ell)=P(X_0=\ell,X_1=i, X_2=j,X_3=k)$. Then $P=UM_4$, has
as its entries the observable distribution $P(X_1,X_2,X_3,X_4)$.
Thus $U=PM_4^{-1}$ can be determined from $P$. Since $U$ is the
distribution of the induced model on $X_0,X_1,X_2,X_3$ with no
hidden variables, it is then straightforward to identify all
remaining parameters of the original model.

Thus all parameters are identifiable generically, up to label swapping. More specifically,
they are identifiable provided that for either $j=0$ or $1$ the
three matrices $M_4,M_2^{(j)}, M_3^{(j)}$ have rank 2, and $\mathbf
p_0$ and the $j$th column of $M_1$ have non-zero entries.

\subsection{Models 4-3e,f}\label{subsec:43ef} Due to Markov
equivalence, we need
consider only 4-3e, as shown in Figure \ref{fg:4.3e}.

\begin{figure}[h]
\begin{center}
\includegraphics[height=2cm]{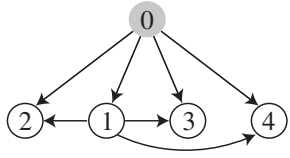} 
\end{center}
\caption{The DAG of model 4-3e.}\label{fg:4.3e}
\end{figure}

By conditioning on  $X_1=j$, $j=1,2$, we obtain two models of the
form of {3-0}. One checks that the induced parameters for these
conditioned models are generic. Indeed, in terms of the original
parameters they are $P(X_i~|~X_0,X_1=j)$, $i=2,3,4$, which are
generically non-singular since they are simply submatrices of the
$P(X_i~|~X_0,X_1)$, and at the hidden node $$P(X_0~|~X_1=j)=\frac{
P(X_1=j~|~X_0)P(X_0)}{
\sum_\ell P(X_1=j ~|~X_0=\ell)P(X_0=\ell)}$$ which generically has
non-zero entries.

Thus for generic parameters on the original model, up to label swapping 
we can determine $P(X_0~|~X_1=j)$ and $P(X_i~|~X_0,X_1=j)$, 
$i=2,3,4$.
However, we do not have an ordering of the states of $X_0$ that is
consistent for the recovered parameters for the two models. 
Generically we have 4 choices of parameters for the 2 models taken
together. Each of these 4 choices leads to a possible joint
distribution $P(X_0,X_1)$; viewing this joint distribution as a
matrix, the 4 versions differ only by independently interchanging
the two entries in each column, thus keeping the same
marginalization $P(X_1)$. Generically, each of the 4 distributions for $P(X_0,X_1)$ yields different parameters $P(X_0)$
and $P(X_1|X_0)$. The matrices $P(X_i|X_0,X_1)$, $i=2,3,4$, are then
obtained using the same rows as in $P(X_i~|~X_0,X_1=j)$, though the
ordering of the rows is dependent on the choice made previously.

Having obtained 4 possible parameter choices, it is straightforward
to confirm that they all lead to the same joint distribution. Thus
the parameterization map is generically 4-to-one.

\section{Identification of causal effects}\label{sec:4to1}

Here we examine the impact of $k$-to-one model parameterizations on the causal effect of one observable variable on another, when a latent variable acts as a confounder.
For simplicity we assume that the latent variable is binary, though our discussion can be extended to a more general setting.

According to Theorem 3.2.2 (Adjusting for Direct Causes), p.~73 of \cite{Pearl2nd}, 
the causal effect of $X_i$ on $X_j$ can be obtained from model parameters by an appropriate sum over the states of the other direct causes of $X_j$.  
This sum is invariant under a relabeling of the states of those direct causes, and therefore the causal effect 
is not affected by label swapping if one of these is latent. As an instance, the causal effect of $X_1$ on $X_2$ in model 
4-2b is:
\begin{multline}
P(X_2  \mid  do(X_1=x_1))=P(X_2  \mid  X_1=x_1, X_0=1)P(X_0=1)\\
 +P(X_2  \mid  X_1=x_1, X_0=2) P(X_0=2).
\label{do}
\end{multline}
Thus when label swapping is the only source of parameter non-identifiability, causal effects are uniquely determined by the observable distribution.
 
Things are more complex when parameter non-identifiability arises in other ways. For example, model 4-3e has
one binary latent variable but a $4$-to-$1$ parameterization.
In Table \ref{tb:rational} two choices of parameters, (1), (2), are given for this model, as well as the common observable distribution they produce. 
These parameters and their two variants from label swapping at node 0 give the four elements of the fiber of the observable distribution.

\begin{table}
\caption{A rational example for model 4-3e. The parameter choices (1) and (2) lead to the same observable distribution, shown at the bottom.
For the $4\times 2$ matrix parameters, row indices refer to states of a pair of parents $i<j$ ordered 
lexicographically as  (1,1), (1,2), (2,1), (2,2), with the first entry referring to parent $i$, and the second to parent $j$.}\label{tb:rational}
\begin{tabular}{lc}
\hline
(1)& \\
&$\mathbf{p}_0 = \begin{pmatrix} 2/5 & 3/5 \end{pmatrix}
\hskip .5cm
M_1 = \begin{pmatrix} 2/5 & 3/5\\ 14/15 &1/15 \end{pmatrix}
$\\
\ \\
&$M_2 =  \begin{pmatrix} 
2/5 & 3/5 \\ 3/5 & 2/5 \\ 4/5 & 1/5 \\ 9/10 & 1/10
\end{pmatrix}
\hskip .5cm 
M_3 =  \begin{pmatrix} 
1/5& 4/5\\  9/20 & 11/20\\ 1/2 & 1/2\\  2/5 &  3/5
\end{pmatrix}
\hskip .5cm 
M_4 =  \begin{pmatrix} 
1/2 & 1/2\\  7/10 &  3/10\\ 4/5 & 1/5\\  3/5 &  2/5
\end{pmatrix}$\\
\ \\
\hline
(2)& \\
&$\mathbf{p}'_0 = \begin{pmatrix} 1/5 & 4/5 \end{pmatrix}
\hskip .5cm
M'_1 = \begin{pmatrix} 4/5 & 1/5\\ 7/10 & 3/10 \end{pmatrix}
$\\
\ \\
& $M'_2 =  \begin{pmatrix} 
2/5 & 3/5\\ 9/10 & 1/10\\ 4/5 & 1/5\\  3/5 &  2/5
\end{pmatrix}
\hskip .5cm 
M'_3 =  \begin{pmatrix} 
1/5 & 4/5\\  2/5 &  3/5\\ 1/2 &  1/2\\ 9/20 & 11/20
\end{pmatrix}
\hskip .5cm 
M'_4 =  \begin{pmatrix} 
1/2 &  1/2\\  3/5 &   2/5\\ 4/5 &  1/5\\ 7/10 &   3/10
\end{pmatrix}
$\\
\ \\
\hline
\ \\
\multicolumn{2}{l}
{$P(X_1,X_2,X_3=1,X_4=1) = \hskip .5in  P(X_1,X_2,X_3=1,X_4=2) =$}\\
\multicolumn{2}{c}{$\left[ \begin{matrix} 116/625 & 34/625\\  27/500 & 39/1250 \end{matrix} \right] \hskip .8in
\left[ \begin{matrix} 32/625 & 13/625\\ 63/2500 & 17/1250  \end{matrix} \right]$}\\
\ \\
\multicolumn{2}{l}{$P(X_1,X_2,X_3=2,X_4=1) = \hskip .5in P(X_1,X_2,X_3=2,X_4=2)  = $} \\
\multicolumn{2}{c}{$\left[ \begin{matrix} 128/625 & 52/625\\171/2500 & 24/625 \end{matrix} \right] \hskip .8in
\left[ \begin{matrix} 44/625 & 31/625\\ 81/2500 & 21/1250 \end{matrix} \right]$}\\
\ \\
\hline
\end{tabular}
\end{table}
%

For any parameters of model 4-3e, the causal effect of $X_1$ on $X_2$ is again as given in equation (\ref{do}). However, due to the 
$4$-to-$1$ parameterization, there can be two different causal effects that are consistent with an observable distribution. As such, 
there may be distributions such that one causal effect leads to the conclusion that 
there is a positive effect of $X_1$ on $X_2$ 
(i.e., setting $X_1=2$ gives a higher probability of $X_2=2$ than setting $X_1=1$), while the 
other causal effect leads to the conclusion that there is a negative effect of $X_1$ on $X_2$. 
Indeed, the observable distribution in Table \ref{tb:rational} is such an instance.
In Table \ref{tb:doeffect} the two causal effects corresponding to that given distribution
are shown. Here parameters (2) lead to a positive effect of $X_1$ on $X_2$, while parameters (1) lead to a negative effect.

\begin{table}
 \caption{The causal effects of $X_1$ on $X_2$ for the example in 
 Table \ref{tb:rational}.}\label{tb:doeffect}
\begin{tabular}{l|c|c|c}
\hline
Parameters & (a) & (b) & (a)$-$(b)\\ 
 &$\dsp P(X_2 =2 \mid do(X_1=2) )$ & $\dsp P(X_2 =2 \mid do(X_1=1) )$ \\
 \hline
 & & &\\
\quad (1) &  ${11}/{50}$& $ {9}/{25}$ & ${-7}/{50}$\\
& & & \\
\quad (2) &  ${17}/{50}$& $ {7}/{25}$ & ${\phantom{-}3}/{50}$\\
& & & \\
\hline
\end{tabular}
\end{table}

More generally, for generic observable distributions of this model, choices of parameters that differ other than by label swapping will give different causal effects.  However, it varies whether the effects have the same or different signs.

\section{Conclusion}

Paraphrasing \citet{Pearl2012}, the problem of identifying causal
effects in non-parametric models has been ``placed to rest"  by the
proof of completeness of the $do$-calculus and related graphical criteria.
In this paper we show that the introduction of modest (parametric) assumptions 
on the size of the 
state spaces of variables allows for identifiability of parameters that otherwise would be
non-identifiable.
Causal effects can be computed from identified parameters, if desired, 
but our techniques allow for the recovery of all parameters.  
In the process of proving parameter identifiability for several small networks, we use techniques 
inspired by a theorem of Kruskal, and other novel approaches. This framework can be applied 
to other models as well.

We have at least three reasons to extend the work described in this paper. The first 
is to develop new techniques and to prove new theoretical results for parameter
identifiability; this provides the foundation of our work.  
A second is to reach the stage at which one can easily determine parameter identifiability for DAG models with hidden variables that are 
used in statistical modeling; this motivates our work.
A third and  related focus of future work is to address the scalability of our 
approach and to automate it.  As noted above many of our arguments do not depend on variables being binary. Also,
a strategy that we used successfully to handle larger models is to 
first marginalize or condition on an observable variable to reduce the model 
to one already understood, and then to `lift' results on the reduced model back 
to the original one.  We are working towards turning this strategy into an algorithm.

\subsubsection*{Acknowledgments}

The authors thank the American Institute of Mathematics, where this work was begun during a workshop on
Parameter Identification in Graphical Models, and continued through AIM's SQuaRE program.

\appendix
\section*{Appendix}
Table \ref{tb:results} shows all DAGs with 4 or fewer observable nodes
and one hidden node that is a parent of all observable ones. See Section
\ref{sec:binary} for model naming convention. Markov equivalent
graphs appear on the same line.
The dimension of the parameter space is $\dim(\Theta)$,  and $2^A-1$
is the dimension of the probability simplex in which the joint distribution lies.
The parameterization map is generically $k$-to-one.

\begin{center}

\begin{longtable}{|c|c|c|c|c|}
\caption{Small binary DAG models.}\label{tb:results} \\
\hline \multicolumn{1}{|c|}{Model} & \multicolumn{1}{c|}{Graph} & \multicolumn{1}{c|}{$\dim(\Theta)$}& \multicolumn{1}{c|}{$2^A-1$}&\multicolumn{1}{c|}{$k$} \\ \hline
\endfirsthead
\multicolumn{5}{c}
{{\tablename\ \thetable{} -- continued from previous page}} \\
\hline \multicolumn{1}{|c|}{Model} & \multicolumn{1}{c|}{Graph} & \multicolumn{1}{c|}{$\dim(\Theta)$}& \multicolumn{1}{c|}{$2^A-1$}&\multicolumn{1}{c|}{$k$} \\ \hline
\endhead
\hline \multicolumn{5}{r}{Continued on next page} \\
\endfoot
\hline \hline
\endlastfoot
2-$B$, $B\ge 0$&&$\ge5$&3&$\infty$\\
\hline\hline
3-0& \includegraphics[height=.07\textheight]{fg3-0.eps}&7&7&$2$\\
\hline
3-$Bx$, $B\ge 1$&&$\ge9$&7&$\infty$\\
\hline \hline
4-0&\includegraphics[height=.07\textheight]{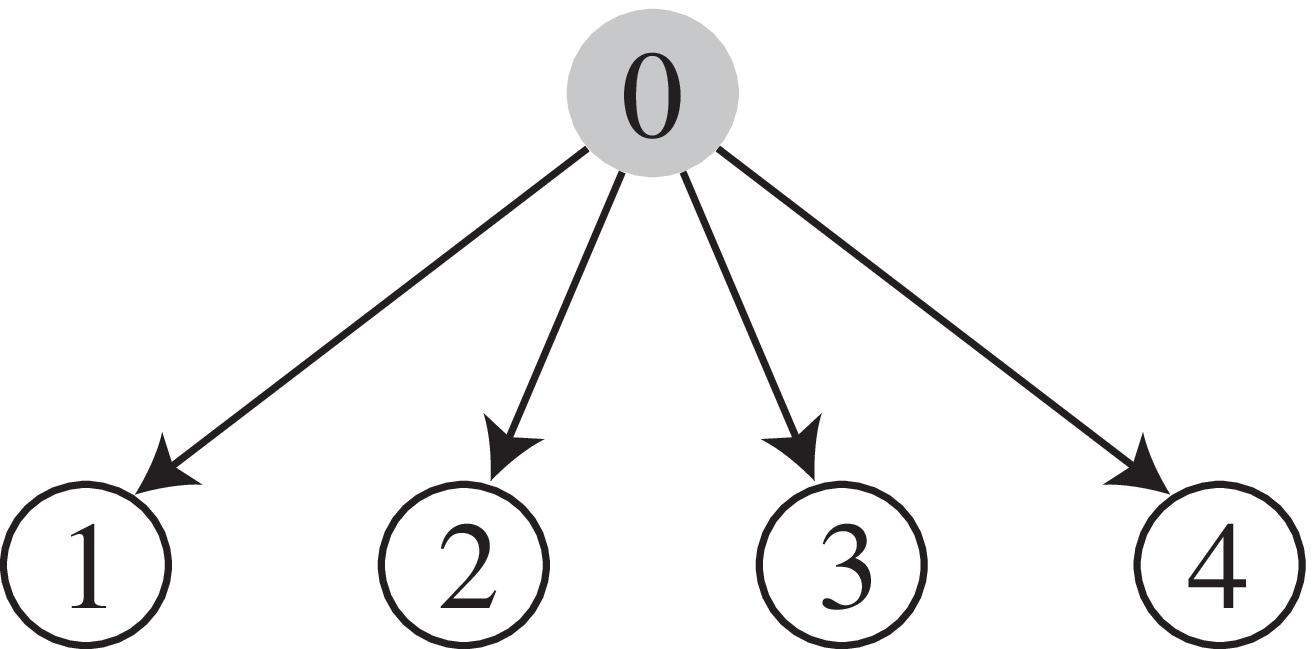} &9&15&$2$\\
\hline
4-1&\includegraphics[height=.07\textheight]{fg4-1.eps} &11&15&$2$\\
\hline

4-2a& \includegraphics[height=.07\textheight]{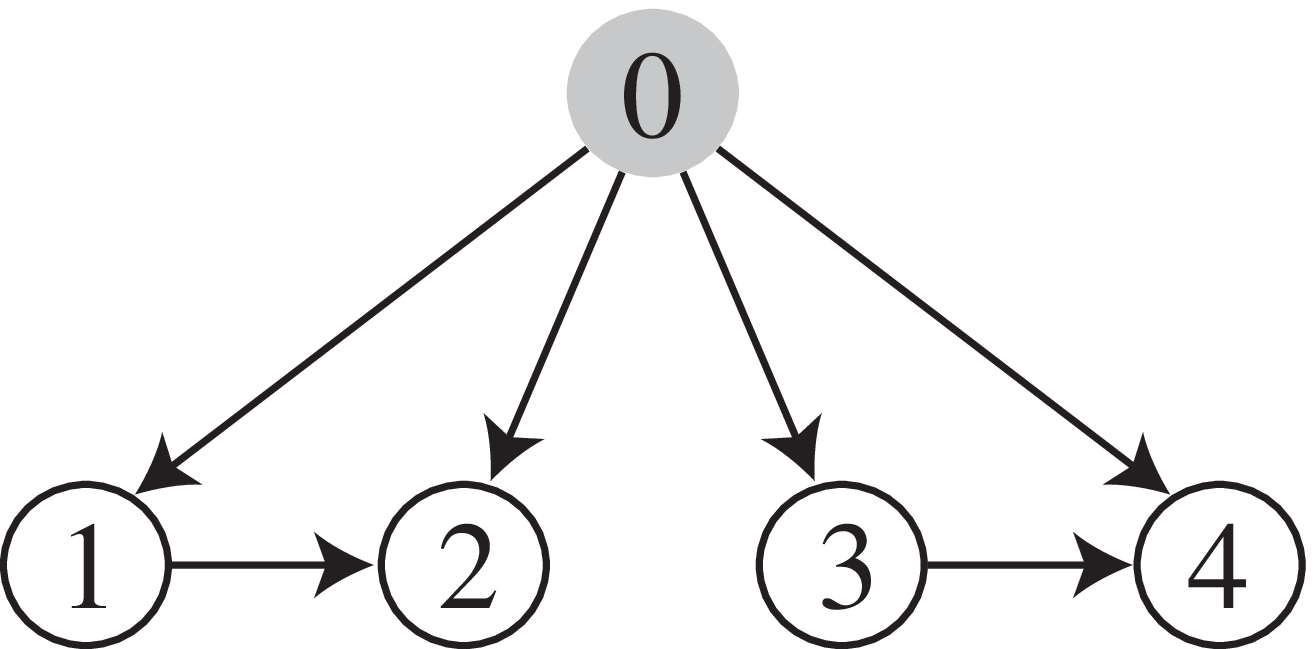}&13&15&$\infty$\\
\hline
4-2b,c&\includegraphics[height=.07\textheight]{fg4-2b.eps}, \includegraphics[height=.07\textheight]{fg4-2c.eps} &13&15&$2$\\
\hline
4-2d&  \includegraphics[height=.07\textheight]{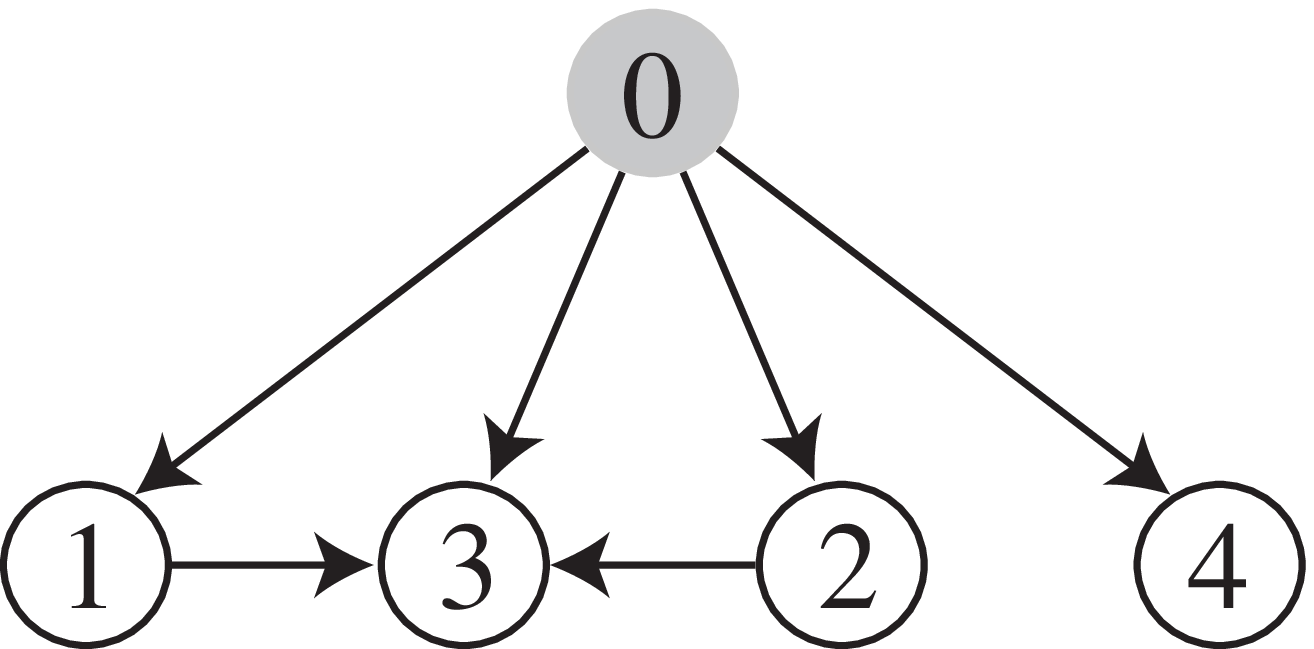} &15&15&$2$\\
\hline
4-3a,b& \includegraphics[height=.07\textheight]{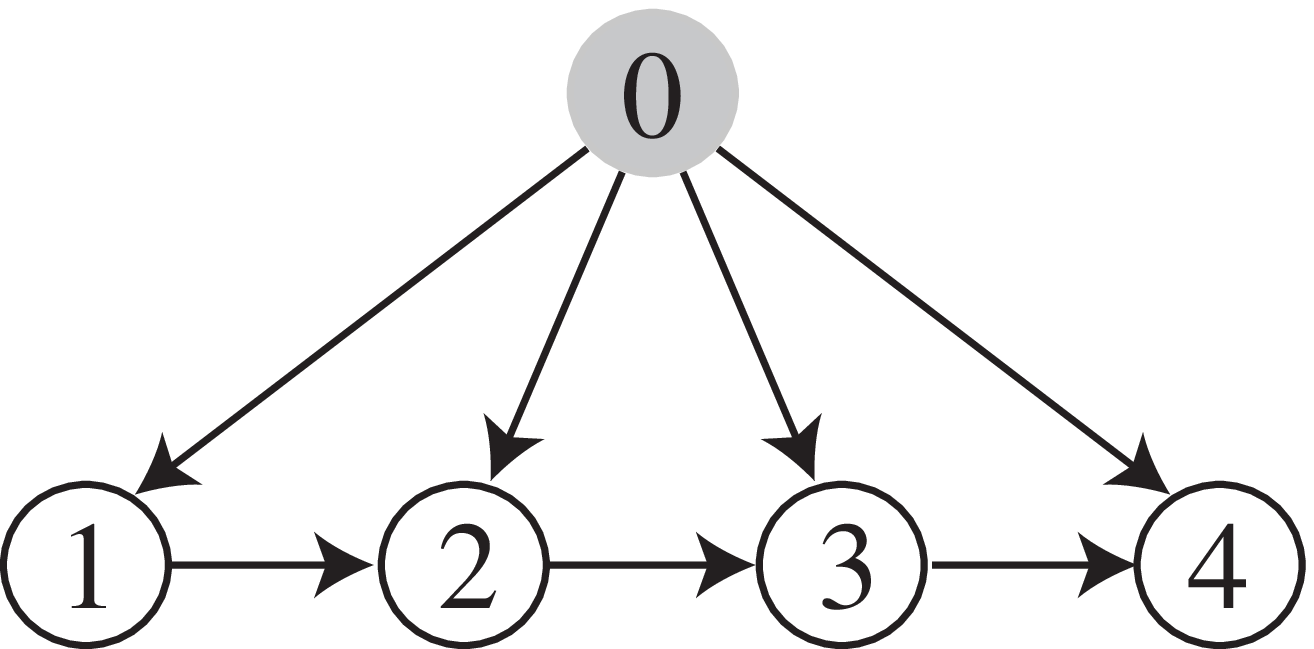}, \includegraphics[height=.07\textheight]{fg4-3b.eps} &15&15&$2$\\ 
\hline
4-3c,d& \includegraphics[height=.07\textheight]{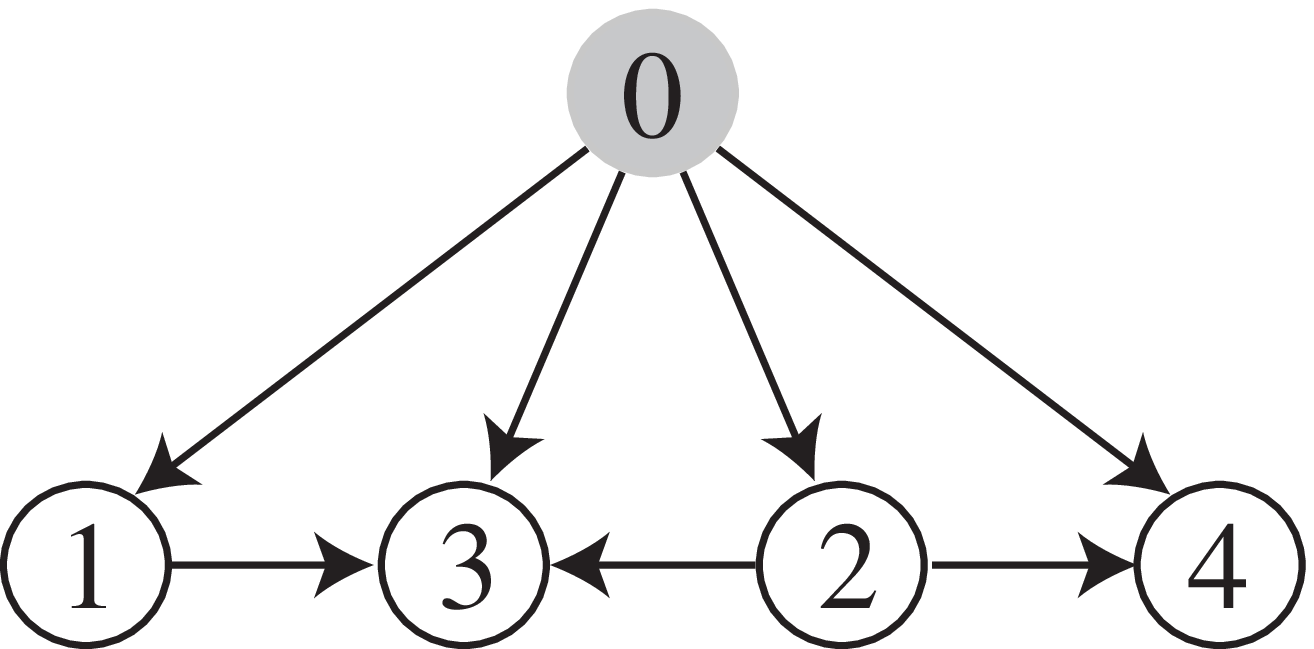}, \includegraphics[height=.07\textheight]{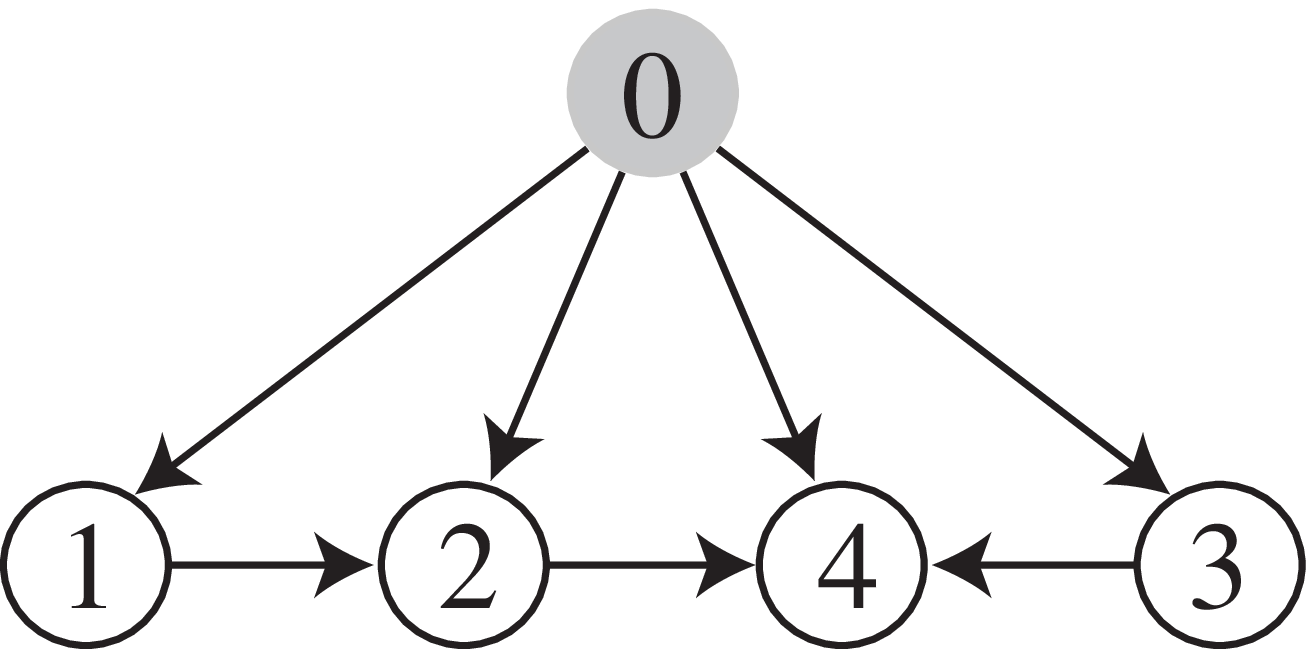}&17&15&$\infty$\\
\hline
4-3e,f &\includegraphics[height=.07\textheight]{fg4-3e.eps}, \includegraphics[height=.07\textheight]{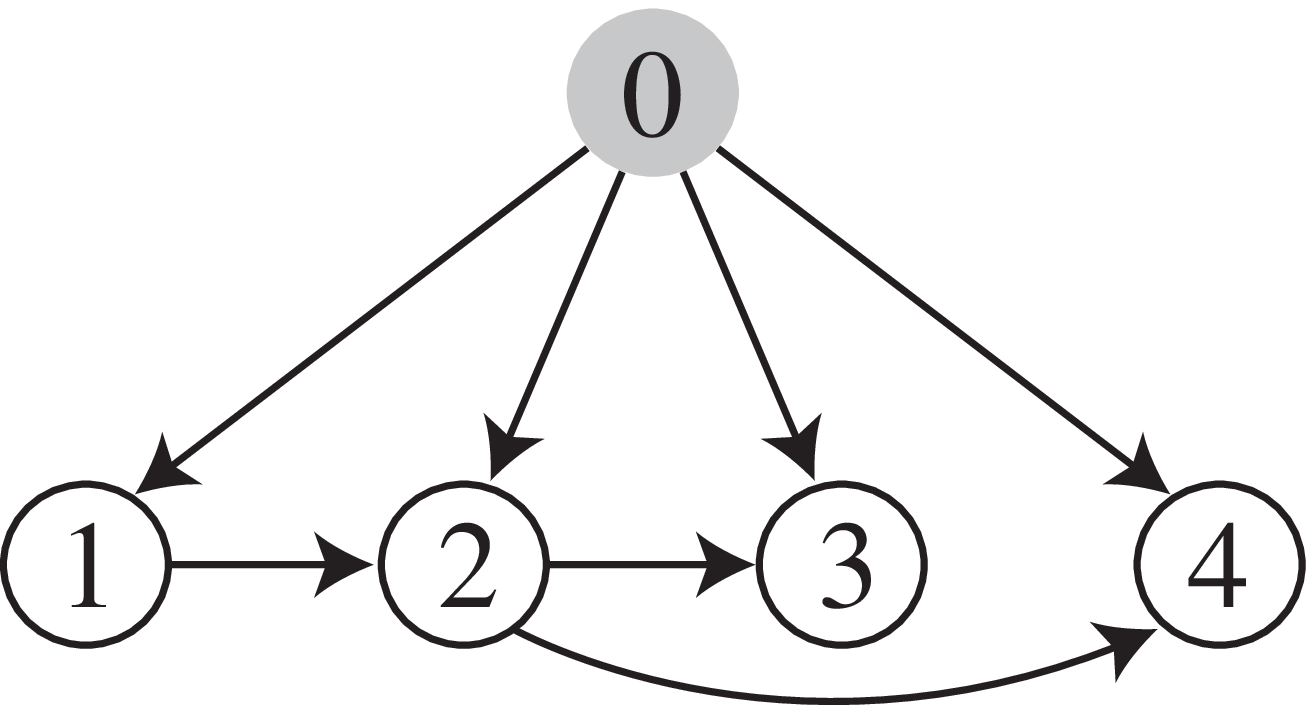} &15&15&$4$\\
\hline
4-3g & \includegraphics[height=.07\textheight]{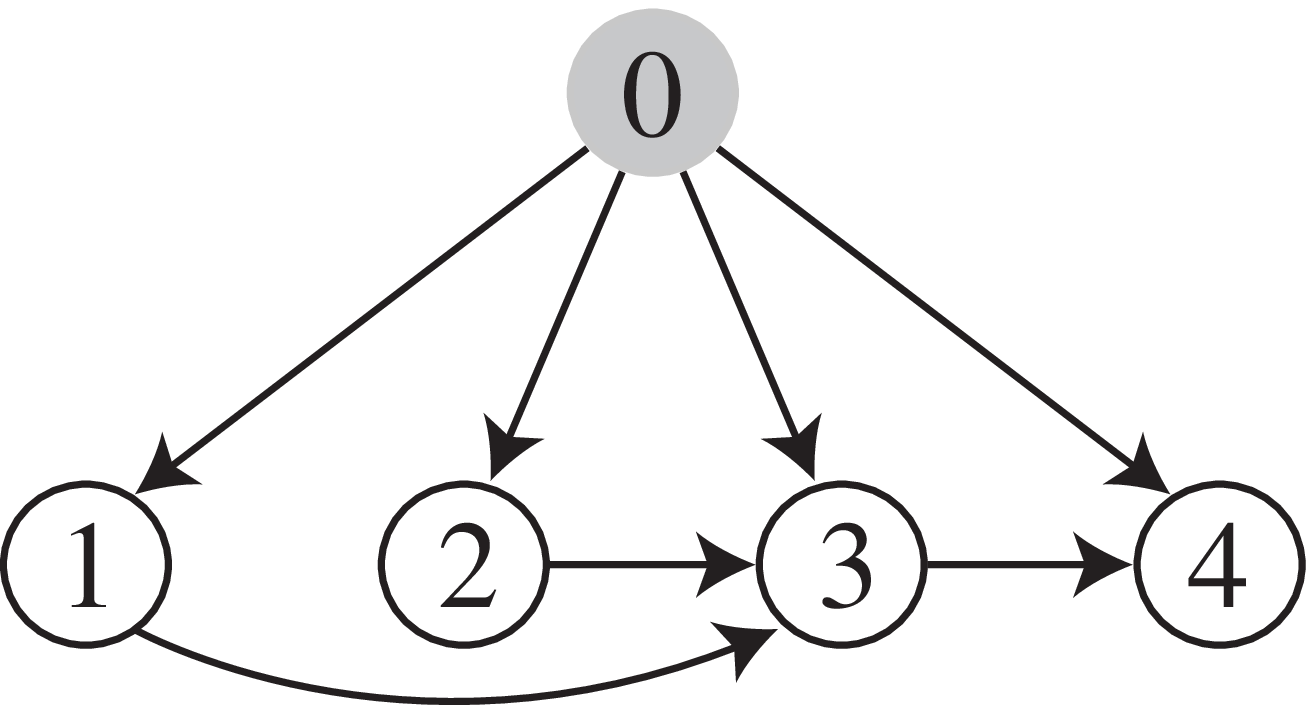}&17&15&$\infty$\\
\hline
4-3h & \includegraphics[height=.07\textheight]{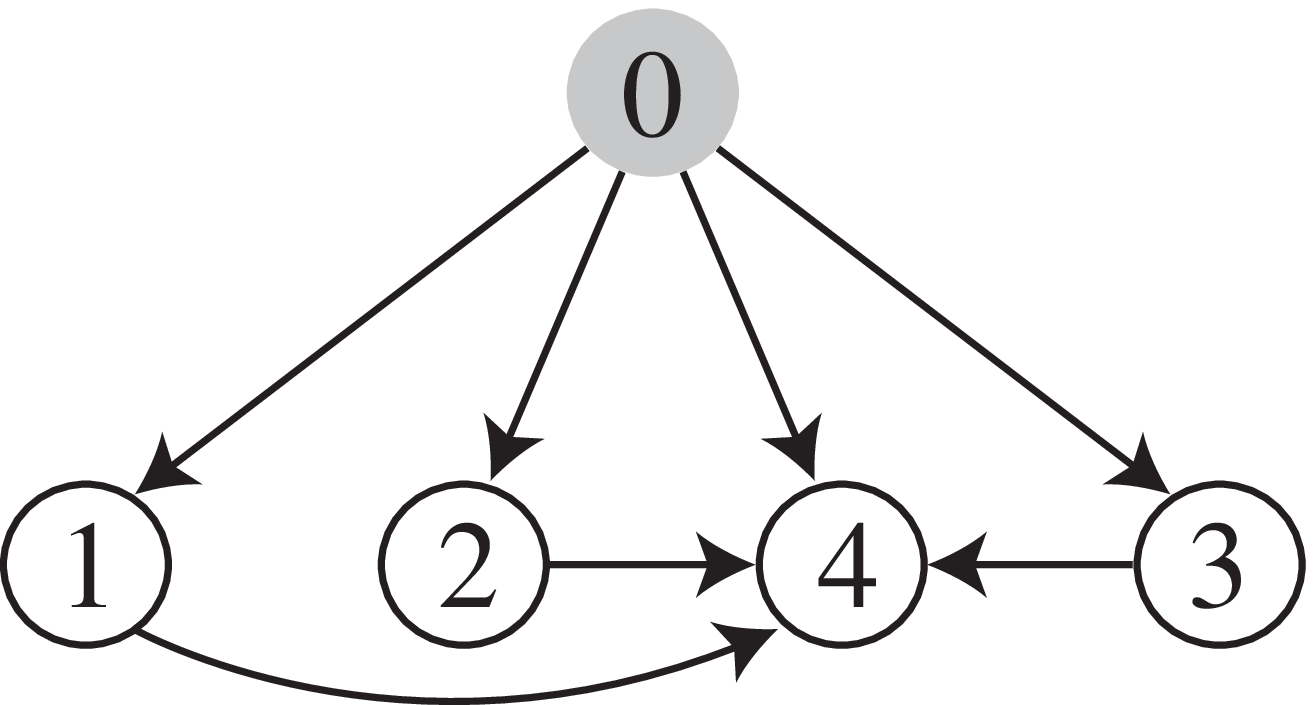} &25&15&$\infty$\\
\hline
4-3i &  \includegraphics[height=.07\textheight]{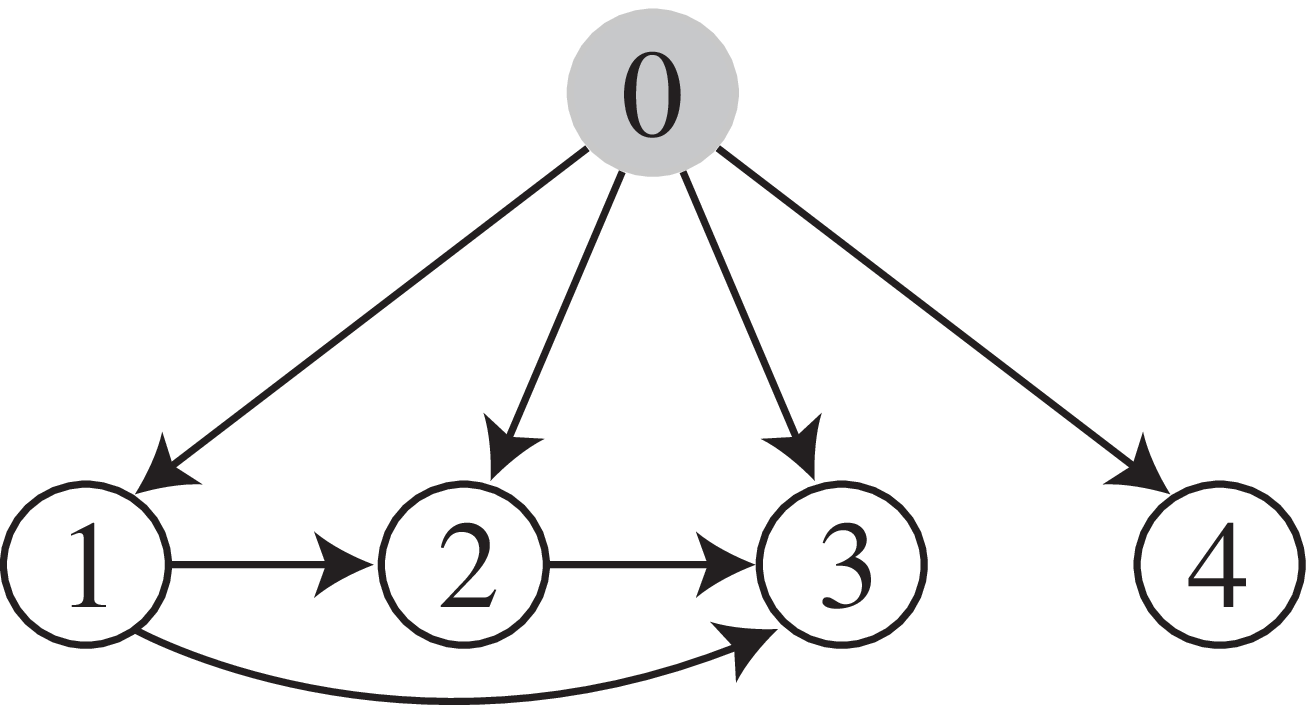}&25&15&$\infty$\\
\hline
4-$Bx$, $B\ge 4$ & &$\ge19$&15&$\infty$
\end{longtable}

\end{center}

\bibliographystyle{DeGruyter}
\bibliography{Causal}

\begin{thebibliography}{17}
\newcommand{\enquote}[1]{``#1''}
\providecommand{\natexlab}[1]{#1}
\providecommand{\url}[1]{\texttt{#1}}
\providecommand{\urlprefix}{URL }

\bibitem[{Allman et~al.(2009)Allman, Matias, and Rhodes}]{AMR09}
Allman, E., C.~Matias, and J.~Rhodes (2009): \enquote{Identifiability of
  parameters in latent structure models with many observed variables,}
  \emph{Ann. Statist.}, 37, 3099--3132.

\bibitem[{Chickering(1995)}]{Chickering95}
Chickering, D.~M. (1995): \enquote{A transformational characterization of
  equivalent {B}ayesian network structures,} in \emph{Proceedings of the
  Eleventh Annual Conference on Uncertainty in Artificial Intelligence
  (UAI-95)}, San Francisco, CA: Morgan Kaufmann, 87--98.

\bibitem[{Huang and Valtorta(2006)}]{Huang2006}
Huang, Y. and M.~Valtorta (2006): \enquote{Pearl's calculus of intervention is
  complete,} in \emph{Proceedings of the Twenty-second Conference on
  Uncertainty in Artificial Intelligence (UAI-06)}, 217--224.

\bibitem[{Kruskal(1977)}]{Kruskal77}
Kruskal, J. (1977): \enquote{Three-way arrays: rank and uniqueness of trilinear
  decompositions, with application to arithmetic complexity and statistics,}
  \emph{Linear Algebra and Appl.}, 18, 95--138.

\bibitem[{Kuroki and Pearl(2014)}]{Kuroki2014}
Kuroki, M. and J.~Pearl (2014): \enquote{Measurement bias and effect
  restoration in causal inference,} \emph{Biometrika}, 101, 423--437.

\bibitem[{Lauritzen(1996)}]{LauritzenBook}
Lauritzen, S.~L. (1996): \emph{Graphical models}, \emph{Oxford Statistical
  Science Series}, volume~17, New York: The Clarendon Press Oxford University
  Press, {O}xford Science Publications.

\bibitem[{Meek(1995)}]{Meek95}
Meek, C. (1995): \enquote{Strong completeness and faithfulness in {B}ayesian
  networks,} in \emph{Proceedings of the Eleventh Annual Conference on
  Uncertainty in Artificial Intelligence (UAI-95)}, San Francisco, CA: Morgan
  Kaufmann, 411--418.

\bibitem[{Neapolitan(1990)}]{Neapolitan90}
Neapolitan, R.~E. (1990): \emph{Probabilistic Reasoning in Expert Systems:
  Theory and Algorithms}, New York, NY: John Wiley and Sons.

\bibitem[{Neapolitan(2004)}]{Neapolitan04}
Neapolitan, R.~E. (2004): \emph{Learning Bayesian Networks}, Upper Saddle
  River, NJ: Pearson Prentice Hall.

\bibitem[{Pearl(1995)}]{Pearl1995}
Pearl, J. (1995): \enquote{Causal diagrams for empirical research,}
  \emph{Biometrika}, 82, 669--710.

\bibitem[{Pearl(2009)}]{Pearl2nd}
Pearl, J. (2009): \emph{Causality: Models, reasoning, and inference}, Cambridge
  University Press, Cambridge, second edition.

\bibitem[{Pearl(2012)}]{Pearl2012}
Pearl, J. (2012): \enquote{The do-calculus revisited,} in \emph{Proceedings of
  the Twenty-eighth Conference on Uncertainty in Artificial Intelligence
  (UAI-12)}, 4--11.

\bibitem[{Rhodes(2010)}]{JRkruskal}
Rhodes, J. (2010): \enquote{A concise proof of {K}ruskal's theorem on tensor
  decomposition,} \emph{Linear Algebra and its Applications}, 432, 1818--1824.

\bibitem[{Shpitser and Pearl(2008)}]{Shpitser2008}
Shpitser, I. and J.~Pearl (2008): \enquote{Complete identification methods for
  the causal hierarchy,} \emph{J. Mach. Learn. Res.}, 9, 1941--1979.

\bibitem[{Stanghellini and Vantaggi(2013)}]{Stanghellini2013}
Stanghellini, E. and B.~Vantaggi (2013): \enquote{Identification of discrete
  concentration graph models with one hidden binary variable,}
  \emph{Bernoulli}, 19, 1920--1937,
  \urlprefix\url{http://dx.doi.org/10.3150/12-BEJ435}.

\bibitem[{Stegeman and Sidiropoulos(2007)}]{StegSid}
Stegeman, A. and N.~D. Sidiropoulos (2007): \enquote{On {K}ruskal's uniqueness
  condition for the {C}andecomp/{P}arafac decomposition,} \emph{Linear Algebra
  Appl.}, 420, 540--552.

\bibitem[{Tian and Pearl(2002)}]{TianP02a}
Tian, J. and J.~Pearl (2002): \enquote{A general identification condition for
  causal effects,} in \emph{Proceedings of the Eighteenth National Conference
  on Artificial Intelligence (AAAI-02)}, 567--573.

\end{thebibliography}

\Addresses

\end{document}